\author{Vincenzo Ferrazzano
\thanks{Center for Mathematical Sciences, Technische Universit\"at M\"unchen,  85748 Garching b. M\"unchen, Germany,
email: ferrazzano@ma.tum.de, http://www-m4.ma.tum.de}\and
Florian Fuchs\thanks{Institute of Mathematical Finance, Ulm University, 89081 Ulm and International School of Applied Mathematics, Technische Universit\"at M\"unchen, 85748 Garching b. M\"unchen, Germany,
email: ffuchs@ma.tum.de, http://www-m4.ma.tum.de}
}
\title{Noise recovery for L\'evy-driven CARMA processes and high-frequency behaviour of approximating Riemann sums}
\numberwithin{equation}{section}
\newtheorem{thm}{Theorem}[section]
\newtheorem{cor}[thm]{Corollary}
\newtheorem{lem}[thm]{Lemma}
\newtheorem{prop}[thm]{Proposition}
\newtheorem{defn}[thm]{Definition}
\newtheorem{example}[thm]{Example}
\newtheorem{Assumption}{Assumption}
\newtheorem{oss}[thm]{Remark}
\newtheorem{fig}[thm]{Figure}
\def\var{\mathop{\textrm{var}}}
\newcommand{\displayfrac}[2]{\frac{\displaystyle #1}{\displaystyle #2}}
\newcommand{\bthe}{\begin{thm}}
\newcommand{\ethe}{\end{thm}}
\newcommand{\ble}{\begin{lem}}
\newcommand{\ele}{\end{lem}}
\newcommand{\bde}{\begin{defn}}
\newcommand{\ede}{\end{defn}}
\newcommand{\bco}{\begin{cor}}
\newcommand{\eco}{\end{cor}}
\newcommand{\bpr}{\begin{prop}}
\newcommand{\epr}{\end{prop}}
\newcommand{\bproof}{\begin{proof}}
\newcommand{\eproof}{\end{proof}}
\newcommand{\bexam}{\begin{example}\rm}
\newcommand{\eexam}{\halmos\end{example}}
\newcommand{\brem}{\begin{oss}\rm}
\newcommand{\erem}{\halmos\end{oss}}
\newcommand{\bfi}{\begin{fig}}
\newcommand{\efi}{\end{fig}}
\newcommand{\beao}{\begin{eqnarray*}}
\newcommand{\eeao}{\end{eqnarray*}\noindent}
\newcommand{\beam}{\begin{eqnarray}}
\newcommand{\eeam}{\end{eqnarray}\noindent}
\newcommand{\barr}{\begin{array}}
\newcommand{\earr}{\end{array}}
\newcommand{\beq}{\begin{equation}}
\newcommand{\eeq}{\end{equation}}
\def\g{>}
\def\l{<}
\def\bbr{{\Bbb R}}
\def\bbn{{\Bbb N}}
\def\bbc{{\Bbb C}}
\def\bbz{{\Bbb Z}}
\def\bbe{{\Bbb E}}
\newcommand{\dsum}{\displaystyle\sum}
\newcommand{\stirlingtwo}[2]{\genfrac{\{}{\}}{0pt}{}{#1}{#2}}
\newcommand{\la}{{\lambda}}
\newcommand{\CARMA}{{\rm CARMA}}
\newcommand{\COGARCH}{{\rm COGARCH}}
\newcommand{\CAR}{{\rm CAR}}
\newcommand{\AR}{{\rm AR}}
\newcommand{\ARMA}{{\rm ARMA}}
\newcommand{\MA}{{\rm MA}}
\newcommand{\bone}{{\mathbf{1}}}
\def\halmos{\hfill $\Box$  \medskip }
\begin{document}
\maketitle
\begin{abstract} 
 We consider high-frequency sampled continuous-time autoregressive moving average \linebreak (CARMA) models driven by finite-variance zero-mean L\'evy processes. An $L^2$-consistent estimator for the increments of the driving L\'evy process without order selection in advance is proposed if the CARMA model is invertible. 
 In the second part we analyse the high-frequency behaviour of approximating Riemann sum processes, which represent a natural way to simulate continuous-time moving average models on a discrete grid. We compare their autocovariance structure with the one of sampled CARMA processes and show that the rule of integration plays a crucial role. Moreover, new insight into the kernel estimation procedure of \cite{bfk:2011:2} is given.
\end{abstract}

\vspace{0.2cm}

\noindent
\begin{tabbing}
\emph{AMS Subject Classification 2010: }\=Primary:  60G10,\, 60G51
\\ \> Secondary: 62M10
\end{tabbing}

\vspace{0.2cm}\noindent\emph{Keywords:} CARMA process, high-frequency data, L\'evy process, discretely sampled process, noise recovery.

\section{Introduction}
The constantly increasing availability of high-frequency data in finance and sciences in general has sparked in the last decade a great deal of attention about the asymptotic behaviour of high-frequency sampled {processes}, especially  concerning the estimation of multi-power variations of  It\=o {semimartingales} (see, e.g., \cite{realvolmultipow}, \cite{RPVstoch:vol}), employing their realised counterparts. These quantities are of primary importance to practitioners, since they embody the deviation of data from a Brownian motion. Such methods are summarised in the book of \cite{jacod:protter:discr}, which represents the most recent review on the subject.

In many areas of application L\'evy-driven processes are used for modelling time series. An ample class within this group are continuous-time moving average (CMA) processes
\begin{equation*}Y_t=\int_{-\infty}^\infty g(t-s)dL_s, \quad t \in\bbr,
\end{equation*}
%
where $g$ is the so-called kernel function and $L = \{L_t\}_{t\in\bbr}$ is said to be the driving L\'evy process (see, e.g., \cite{sato:1999} for a detailed introduction). They cover, for instance, Ornstein-Uhlenbeck and continuous-time autoregressive moving average (CARMA) processes. The latter are the continuous-time analogue of the well-known ARMA models (see, e.g., \cite{BD}) and have extensively been studied over the recent years (cf. \cite{brockwell5,Brockwell:2004,BrLi,todorov:tauchen:2006}). Originally, driving processes of CARMA models were restricted to Brownian motion (see \cite{Doob:1944}, and also \cite{Doob:1990df}). However, \cite{brockwell5} allowed for L\'evy processes with a finite $r$th moment for some $r>0$.

L\'evy-driven CARMA models are widely used in various areas of application like signal processing and control (cf. \cite{GarnierWang2008,LarssonMossbergSoederstroem2006}), high-frequency financial econometrics (cf. \cite{Todorov2009}), and financial mathematics (cf. \cite{BenthKoekebakkerZakamouline2010,BCL:2006,HaCz:2007,todorov:tauchen:2006}). Stable CARMA processes can be relevant in modelling energy markets (cf. \cite{Benth:Klu:Muller:Vos,Garciaetal2010}). Very often, a correct specification of the driving L\'evy process is of primary importance in all these applications.

In this paper we are concerned with a high-frequency sampled CARMA process driven by a second-order zero-mean L\'evy process. Under the assumption of \textit{invertibility} of the CARMA model, we present an $L^2$-{consistent} estimator for the increments of the driving L\'evy  process, employing standard time series techniques. It is remarkable that the proposed procedure works for arbitrary autoregressive and moving average orders, i.e. there is no need for \textit{order selection} in advance. In the light of the results in \cite{bfk:2011:2} and the {flexibility} of $\CARMA$ processes, the method might apply to a wider class of CMA models, too. Moreover, since the proof employes only the fact that the increments of the L\'evy process are orthogonal rather than independent, the result holds for a much broader class of driving processes. Notable examples are the $\COGARCH$ processes (\cite{BCL:2006,klm:2004}) or time-changed L\'evy processes (\cite{stovollevy}), which are often used to model volatility clustering in finance and  intermittency in turbulence. 

This noise recovery result gives rise to the conjecture that the sampled CARMA process behaves on a high-frequency time grid approximately like a suitable MA$(\infty)$ model that we call \textit{approximating Riemann sum process}. By comparing the asymptotic properties of the autocovariance structure of high-frequency sampled CARMA models with the one of their approximation Riemann sum processes, it will turn out that the so-called \textit{rule} of the Riemann sums plays a crucial role if the difference between the autoregressive and moving average order is greater than one. On the one hand, this gives new insight into the kernel estimation procedure studied in \cite{bfk:2011:2} and explains at which points the kernel is indeed estimated. On the other hand, this has obvious consequences for simulation purposes. Riemann sum approximations are an easy tool to simulate CMA processes. However, our results show that one has to be careful with the chosen rule of integration in the context of certain CARMA processes. 

The outline of the paper is as follows. In Section \ref{sec:preliminaries} we recall the definition of finite-variance CARMA models and summarise important properties of high-frequency sampled CARMA processes. In particular, we fix a global assumption that guarantees causality and invertibility for the sampled sequence. In the third section we then derive an $L^2$-consistent estimator for the increments of the driving L\'evy  process starting from the Wold representation of the sampled process. It will turn out that \textit{invertibility} of the original continuous-time model is sufficient and necessary for the recovery result to hold. Section \ref{sec:noise recovery} is completed by an illustrating example for CAR$(2)$ and CARMA$(2,1)$ processes. Thereafter, the high-frequency behaviour of approximating Riemann sum processes is studied in Section \ref{riemann:asympt}. First, an ARMA representation for the Riemann sum approximation is established in general and then the role of the rule of integration is analysed by matching the asymptotic autocovariance structure of sampled CARMA processes and their Riemann sum approximations in the cases where the autoregressive order is less or equal to three. The connection between the Wold representation and the approximating Riemann sum yields a deeper insight into the kernel estimation procedure introduced in \cite{bfk:2011:2}. 
The proof of Theorem~\ref{noise:extraction} and some auxiliary results can be found in the appendix. 
\section{{Preliminaries}} \label{sec:preliminaries}
\subsection{{Finite-variance CARMA processes}}
Throughout this paper we are concerned with a CARMA process driven by a second-order zero-mean L\'evy
process $L = \{L_t\}_{t\in\bbr}$ with $EL_1=0$ and $EL_1^2=1$. It is defined as follows.

For non-negative integers $p$ and $q$ such that $q<p$, a $\CARMA(p,q)$ process $Y=\{Y_t\}_{{t\in\bbr}}$ with
real coefficients
$a_{1},\ldots,a_{p}$, $b_{0},\ldots,b_{q}$ and driving L\'evy process $L$ is defined to be a
strictly stationary solution of the
suitably interpreted formal equation
\begin{equation}\label{1.1}
a(D)Y_t=\sigma b(D)DL_t,\quad t\in\bbr,
\end{equation}
where $D$ denotes differentiation with respect to $t$, $a(\cdot)$ and $b(\cdot)$ are the characteristic polynomials,
$$a(z):=z^{p}+a_{1}z^{p-1}+\cdots+a_{p}\quad\mbox{and}\quad
b(z):=b_{0}+b_{1}z+\cdots+b_{p-1}z^{p-1},$$
the
coefficients $b_{j}$ satisfy $b_{q}=1$ and $b_{j}=0$ for $q<j<p$, and $\sigma$ is a positive constant.
The polynomials $a(\cdot)$ and $b(\cdot)$ are assumed to have no common zeroes.
We denote, respectively, by $\lambda_i$ 
and $-\mu_i$ 
the roots of $a(\cdot)$ and $b(\cdot)$, such that these polynomials can be written as $a(z)=\prod_{i=1}^p(z-\lambda_i)$ and $b(z)=\prod_{i=1}^q(z+\mu_i)$.
Moreover, we suppose permanently 
\begin{Assumption} \label{assumption:causal}\begin{itemize}
\item[(i)]
 The zeroes of the polynomial $a(\cdot)$ satisfy $\Re(\la_j)<0$ for every $j=1,\ldots,p$,
\item[(ii)] and the roots of $b(\cdot)$ have non-vanishing real part, i.e. $\Re(\mu_j)\neq 0$ for all $j=1,\ldots,q$.
\end{itemize}
\end{Assumption}
\noindent

Since the derivative $DL_t$ does not exist in the usual
sense, we interpret~(\ref{1.1}) as being equivalent to the
observation and state equations
\begin{equation}\label{1.2}
Y_t=\mathbf{b}^{T}\mathbf{X}_t\,,
\end{equation}
\begin{equation}\label{1.3}
d\mathbf{X}_t=A\mathbf{X}_tdt+\mathbf{e}_{p}dL_t\,,
\end{equation}
where $$\mathbf{X}_t=\left(
          \begin{array}{c}
            X(t) \\
            X^{(1)}(t) \\
            \vdots \\
            X^{(p-2)}(t) \\
            X^{(p-1)}(t) \\
          \end{array}
        \right),\quad
       \mathbf{b}=\left(
       \begin{array}{c}
         b_{0} \\
         b_{1} \\
         \vdots \\
         b_{p-2} \\
         b_{p-1} \\
       \end{array}
     \right),\quad
     \mathbf{e}_{p}=\left(
       \begin{array}{c}
         0 \\
         0 \\
         \vdots \\
         0 \\
         1 \\
       \end{array}
     \right),$$
     $$\hskip .9in A=\left(
        \begin{array}{ccccc}
          0 & 1 & 0 & \ldots & 0 \\
          0 & 0 & 1 & \ldots & 0 \\
          \vdots & \vdots & \vdots &   \ddots & \vdots \\
          0 & 0 & 0 & \ldots & 1 \\
         -a_{p} & -a_{p-1} & -a_{p-2} & \ldots & -a_{1} \\
        \end{array}
      \right)\quad \text{and $A=-a_{1}$ for $p=1$}.$$
It is easy to check that the eigenvalues of the matrix $A$ are the same as the zeroes of the autoregressive polynomial $a(\cdot)$.

Under Assumption \ref{assumption:causal}(i) it has been shown {in} (\cite{BrLi}, Theorem~3.3) that Eqs. \eqref{1.2}-\eqref{1.3} have the unique strictly stationary solution
\begin{equation}\label{1.4}
Y_t=\int_{-\infty}^\infty g(t-u)dL_u,\quad t\in\bbr,
\end{equation}
where
\begin{equation} \label{jordan}g(t)=\begin{cases}{\displayfrac{\sigma}{2\pi i}}\displaystyle\int_{\rho}{\displayfrac{b(z)}{a(z)}e^{tz}dz}=\sigma\dsum_\lambda Res_{z=\lambda}\left(e^{zt}\displayfrac{b(z)}{a(z)}\right), ~&{\rm if}~t>0,\cr
                                                             0, ~&{\rm if}~ t\le 0,\end{cases}
\end{equation}
and $\rho$ is any simple closed curve in the open left half of the complex plane encircling the zeroes of $a(\cdot)$.  The sum  is over the distinct zeroes $\lambda$ of $a(\cdot)$ and $Res_{z=\lambda}(\cdot)$ denotes the residue at $\lambda$ of the function in brackets.
The kernel $g$ can be expressed (cf.~\cite{BrLi}, Equations (2.10) and (3.7)) also as
\begin{equation}\label{jordan:0}
g(t)=\sigma
\mathbf{b^\top} e^{At}\mathbf{e}_{p}{\bf 1}_{(0,\infty)}(t),\quad t\in\bbr,
\end{equation}
and its Fourier transform is
\begin{equation} \label{fourier}
\mathcal{F}\left\{g(\cdot)\right\}(\omega):=\int_{\bbr}g(s)e^{i\omega s}ds=\sigma\frac{b(-i\omega)}{a(-i\omega)},\quad \omega\in\bbr.
\end{equation}
In the light of Eqs.~\eqref{1.4}-\eqref{fourier}, we can interpret a $\CARMA$ process as a continuous-time filtered white noise whose transfer function has a finite number of poles and zeroes.
We emphasise that the condition on the roots of $a(\cdot)$ to lie in the interior of the left half of the complex plane in order to have causality arises from Theorem V, p. 8, \cite{Paley-Wiener}, which is intrinsically connected with the theorems in \cite{Titchmarsh}, pp. 125-129, on the Hilbert transform. A similar request on the roots of $b(\cdot)$ will turn out to be necessary for recovering the driving L\'evy process.

%
\subsection{{Properties of high-frequency sampled CARMA processes}}
We now recall some properties of the sampled sequence {$Y^\Delta:=\{Y_{n\Delta}\}_{n\in\bbz}$} of a {$\CARMA(p,q)$ process where $\Delta>0$}; cf. \cite{bfk:2011:2,bfk:2011:1} and references therein.
It is known that the sampled process {$Y^\Delta$} 
satisfies the $\ARMA(p,p-1)$ equations 
\begin{equation}
\Phi_\Delta(B)Y^\Delta_n=\Theta_{\Delta}(B)Z^{\Delta}_n,\quad n \in\bbz,~~\{Z_n^\Delta\}\sim{\rm WN}(0,\sigma^2_\Delta), \label{sampled:carma}
\end{equation}
with the $\AR$ part $\Phi_\Delta(B):=\prod_{i=1}^p(1-e^{\Delta\lambda_i}B)$, where $B$ is the discrete-time backshift operator, $BY_{n}^\Delta := Y_{n-1}^\Delta$.
Finally, the $\MA$ part $\Theta_\Delta(\cdot)$ is a polynomial of order $p-1$, chosen in such a way that it has no roots inside the unit circle. For {$p>3$ and} fixed $\Delta\g 0$ there is {no explicit expression for the} 
coefficients of $\Theta_\Delta(\cdot)$ nor the white noise process $Z^\Delta$.
Nonetheless, asymptotic expressions for $\Theta_\Delta(\cdot)$ and $\sigma^2_\Delta=\var(Z^\Delta_n)$ as $\Delta\downarrow 0$ were obtained in  \cite{bfk:2011:2,bfk:2011:1}. Namely, we have that the polynomial $\Theta_\Delta(z)$ and the variance $\sigma^2_\Delta$ can be written as (see Theorem 2.1, \cite{bfk:2011:2})
\begin{align}
\Theta_{\Delta}(z)=\prod_{i=1}^{p-q-1}(1+\eta(\xi_i)z)\prod_{k=1}^q(1-\zeta_k z),\quad z\in\bbc,\label{MA:factors}\\
\sigma^2_\Delta=	\frac{\sigma^2\Delta^{2(p-q)-1}}{(2(p-q)-1)!\prod_{i=1}^{p-q-1}\eta(\xi_i)}(1+o(1))\quad\textrm{as } {\Delta\downarrow 0},\label{prediction:error}
\end{align}
where, {again as $\Delta\downarrow 0$}, 
\begin{align}
\zeta_k&=1\pm\mu_k\Delta+o(\Delta),& k=1,\ldots,q,\notag\\
\eta(\xi_i)&=\xi_i-1\pm\sqrt{(\xi_i-1)^2-1}+o(1),& i=1,\ldots,p-q-1.\label{geometric:roots}
\end{align}
{The signs} $\pm$ {in \eqref{geometric:roots} are} chosen in such a way that, {for sufficiently small $\Delta$}, the coefficients {$\zeta_k$ and $\eta(\xi_i)$} are {less than} 
one in absolute value. This ensures that Eq.~\eqref{sampled:carma} is invertible. {Moreover}, $\xi_i$ 
are the zeroes of the function $\alpha_{p-q-1}(\cdot)$ that is defined as the {$(p-q-1)$-th} coefficient {in} the series expansion
\begin{equation}\label{expansion:geometric:roots}
\frac{\sinh(z)}{\cosh(z)-1+x}=\sum_{k=0}^{\infty}\alpha_k(x)z^{{2k+1}},\quad z\in{\bbc}, \ x\in\bbr\backslash\{0\},
\end{equation}
where the LHS of Eq.~\eqref{expansion:geometric:roots} is a power transfer function arising from the sampling procedure (cf. \cite{bfk:2011:1}, Eq. (11)). Therefore the coefficients $\eta(\xi_i)$ 
can be regarded as spurious since they do not depend on the parameters of the underlying continuous-time process $Y$, but just on $p-q$. 
\begin{oss}\label{remarkSpecFact}
Our notion of sampled process is a weak one since we require only that the sampled sequence has the same autocovariance structure as the continuous-time model observed on a discrete grid. We know that the filtered process on the LHS of \eqref{sampled:carma} (\cite{BrLi}, Lemma 2.1) is a $(p-1)$-dependent discrete-time process. Therefore there exist $2^{p-1}$ possible representations for the RHS of \eqref{sampled:carma}, each yielding the same autocovariance function of the filtered process, but only one has its roots outside the unit circle. The latter is called minimum-phase spectral factor (see \cite{Sayed} for a review on the topic). Since it is not possible to discriminate between the different factorisations, we always take the minimum-phase spectral factor without any further question. This will be crucial for our main result.

Moreover, the rationale behind Assumption \ref{assumption:causal}(ii) becomes clear now: if $\Re(\mu_k)=0$ for some $k$, then the corresponding $|\zeta_k|^2$ is equal to $1+\Delta^2|\mu_k|^2 + o(\Delta^2)$ for either sign choice. In this case, the $\MA(p-1)$ polynomial in Eq. \eqref{MA:factors} cannot be invertible for small $\Delta$.

\end{oss}
To ensure that the sampled $\CARMA$ process is invertible, we need to verify that $|\eta(\xi_i)|$ is strictly less than one for 
sufficiently small $\Delta$.
\begin{prop}\label{rootsgreater1}The {coefficients} $\eta(\xi_i)$ in Eq.~\eqref{geometric:roots} are uniquely determined by
$$\eta(\xi_i)=\xi_{i}-1-\sqrt{(\xi_i-1)^2-1} + o(1),\quad i=1,\ldots, p-q-1,$$
 and we have that $\xi_{i}-1-\sqrt{(\xi_i-1)^2-1}\in(0,1)$ for all $i$. 
\end{prop}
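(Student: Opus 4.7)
My plan is to reduce both claims to showing that every root $\xi_i$ of $\alpha_{p-q-1}(\cdot)$ is real and strictly greater than $2$. Setting $\eta_\pm(\xi):=\xi-1\pm\sqrt{(\xi-1)^2-1}$, the identity $\eta_+(\xi)\eta_-(\xi)=1$ forces exactly one of the two values to lie in the open unit disc whenever $|\eta_\pm(\xi)|\neq 1$. For real $\xi>2$, one has $\eta_-(\xi)\in(0,1)$ strictly and $\eta_+(\xi)>1$, so the invertibility requirement $|\eta(\xi_i)|<1$ imposed after \eqref{geometric:roots} uniquely selects the minus sign and simultaneously places $\eta_-(\xi_i)$ in $(0,1)$, establishing both assertions at once.

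To reach $\xi_i>2$, I would first recast the generating function \eqref{expansion:geometric:roots}. Substituting $x=1+\cosh u$ and applying the identities $\cosh z+\cosh u = 2\cosh((z+u)/2)\cosh((z-u)/2)$ and $\sinh z = \sinh((z+u)/2)\cosh((z-u)/2)+\cosh((z+u)/2)\sinh((z-u)/2)$, the left-hand side of \eqref{expansion:geometric:roots} collapses to
\[
\frac{\sinh z}{\cosh z-1+x} = \frac{1}{2}\left[\tanh\frac{z+u}{2}+\tanh\frac{z-u}{2}\right].
\]
Taylor-expanding in $z$ around $0$ and using that $\tanh$ is odd (even-order derivative contributions cancel, odd-order ones add up), I match coefficients with \eqref{expansion:geometric:roots} to obtain
\[
\alpha_k(x) = \frac{\tanh^{(2k+1)}(u/2)}{2^{2k+1}(2k+1)!}, \qquad x=1+\cosh u.
\]
Since $\tanh^{(2k+1)}(u/2)$ is invariant under $u\mapsto-u$ (as an odd-order derivative of an odd function it is even) and under $u\mapsto u+2\pi i$ (the $i\pi$-periodicity of $\tanh$), the roots of $\alpha_k$ in $x$ are in bijection with the equivalence classes of finite zeros of $\tanh^{(2k+1)}$.

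The final step is to count and locate these zeros. Writing $\tanh^{(n)}(w)=R_n(\tanh w)$ with $R_1(t)=1-t^2$ and the recursion $R_{n+1}(t)=(1-t^2)R_n'(t)$, a routine induction yields $R_n(t)=(1-t^2)S_n(t)$ with $\deg S_n=n-1$, so real zeros of $\tanh^{(n)}$ correspond to real zeros of $S_n$ in $(-1,1)$. An extended Rolle argument on $\bbr$, using that $\tanh^{(n)}(\pm\infty)=0$ (since $R_n(\pm 1)=0$), produces at least $n-1$ distinct real zeros by induction, while $\deg S_n=n-1$ caps the number above; hence exactly $n-1$, all real. For $n=2k+1$, the polynomial $S_{2k+1}$ is moreover even, so its $2k$ real zeros pair as $\pm a_i$ with $a_i>0$, $i=1,\ldots,k$ (the value $a_i=0$ is excluded because $S_{2k+1}(0)=\tanh^{(2k+1)}(0)\neq 0$, as all odd-order Taylor coefficients of $\tanh$ are nonzero). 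These $k$ pairs produce the $k$ roots $\xi_i=1+\cosh(2a_i)>2$ strictly. The main technical obstacle is this combined Rolle and degree-count argument: the lower bound requires Rolle applied carefully on unbounded intervals with the vanishing-at-infinity endpoint condition, while the upper bound leverages the polynomial structure of the $R_n$'s.
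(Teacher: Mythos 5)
Your proposal is correct, and the first half of it (reduction to $\xi_i>2$, the identity $\eta_+(\xi)\eta_-(\xi)=1$, and the minimum-phase selection of the minus sign) is exactly the paper's proof of the proposition itself, which simply invokes the auxiliary fact that all zeroes of $\alpha_{p-q-1}$ are real, distinct and greater than $2$ (Proposition \ref{prop:real:roots}). Where you genuinely diverge is in how you establish that fact. The paper shows that the generating function $f(z,x)=\sinh z/(\cosh z-1+x)$ satisfies the mixed PDE \eqref{PDE}, extracts from it the recursion $(2n+3)(2n+1)\alpha_{n+1}=\sqrt{x(x-2)}\,\partial_x[\sqrt{x(x-2)}\,\partial_x\alpha_n]$ with $\alpha_0=1/x$, and then runs an induction with Rolle's theorem directly on the rational functions $\alpha_n(x)$ (using the explicit Stirling-number formula \eqref{Poly} only to pin down the leading and constant coefficients of $P_n$). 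You instead substitute $x=1+\cosh u$ and obtain the closed form $\alpha_k(x)=\tanh^{(2k+1)}(u/2)/(2^{2k+1}(2k+1)!)$, after which the claim reduces to the classical statement that $\tanh^{(2k+1)}$ has only real zeros, proved by your Rolle-plus-degree-count argument on the polynomials $S_n$ with $\tanh^{(n)}(w)=(1-\tanh^2w)S_n(\tanh w)$. The two routes are in fact two faces of the same computation: under $x=1+\cosh u$ one has $\sqrt{x(x-2)}\,\partial_x=\partial_u$, so the paper's recursion is just $\partial_u^2$ acting on your closed form; you have effectively integrated their recursion. What your version buys is an explicit formula for $\alpha_k$ that makes the location of the roots transparent ($\xi_i=1+\cosh(2a_i)>2$ for the positive real zeros $a_i$ of $\tanh^{(2k+1)}$) and avoids the Stirling-number expression \eqref{Poly} entirely; the paper's version keeps everything in the variable $x$ and additionally delivers the interlacing of the roots of successive $\alpha_n$'s as a by-product of the induction. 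Both hinge on the same Rolle-on-unbounded-intervals device, justified in your case by $\tanh^{(n)}(\pm\infty)=0$ and in the paper's by $\alpha_n(x)\to0$ and $\partial_x\alpha_n(x)\to0$ as $x\to\infty$.
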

\begin{proof} 
{It follows from Proposition \ref{prop:real:roots} that $\xi_i\in(2,\infty)$ for all $i=1,\ldots,p-q-1$. This yields $\xi_{i}-1+\sqrt{(\xi_i-1)^2-1}>1$ for all $i$ and hence, we have that 
$$ \eta(\xi_i)=\xi_{i}-1-\sqrt{(\xi_i-1)^2-1} + o(1),\quad i=1,\ldots, p-q-1. $$
Since the {first-order} term of $\eta(\xi_i)$ is positive and monotonously decreasing in $\xi_i$, the additional claim follows.}
\end{proof}

\section{Noise recovery} \label{sec:noise recovery}
In this section we prove the first main statement {of the paper}, a recovery result for the driving L\'evy process. 
We start with some motivation for our approach.

We know that {the sampled CARMA sequence $Y^\Delta=\{Y_{n\Delta}\}_{n\in\bbz}$  
has the Wold representation (cf. \cite{BD}, {p. 187})
\begin{equation}\label{Wold}Y^\Delta_n=\sum_{j=0}^\infty \psi_j^\Delta Z_{n-j}^\Delta=\sum_{j=0}^\infty \left(\frac{\sigma_\Delta}{\sqrt{\Delta}}\psi_j^\Delta\right)\left(\frac{\sqrt{\Delta}}{\sigma_\Delta} Z_{n-j}^\Delta\right),\quad n\in\bbz,\end{equation}
where 
$\sum_{j=0}^\infty(\psi^\Delta_j)^2<\infty$.
Moreover, 
Eq.~\eqref{Wold} is the causal representation of Eq.~\eqref{sampled:carma}, and it has been shown in \cite{bfk:2011:2} that for every causal and invertible $\CARMA(p,q)$ process, as $\Delta\downarrow 0$,
\begin{equation}\frac{\sigma_\Delta}{\sqrt{\Delta}}\psi_{\lfloor t/\Delta\rfloor}^\Delta\rightarrow g(t),\quad t\geq 0,\label{convergence:kernel}\end{equation}
where $g$ is the kernel {in the moving average} representation \eqref{1.4}. 
Given the availability of classical time series methods to estimate $\{\psi^\Delta_j\}_{j\in\bbn}$ and $\sigma^2_\Delta$, and the flexibility of $\CARMA$ {processes}, we argue that {this result} can be applied to {more general continuous-time moving average} models. 

In view of {Eqs.~\eqref{Wold} and \eqref{convergence:kernel}} 
it is natural to investigate whether the quantity
$$\bar{L}^\Delta_{n}:=\frac{\sqrt{\Delta}}{\sigma_\Delta}Z^\Delta_n,\quad n\in\bbz,$$
approximates the increments of the driving L\'evy process in the sense that for every fixed $t>0$,
\begin{equation}\sum_{i=1}^{\lfloor t/\Delta\rfloor}{\bar{L}^\Delta_{i}}\overset{L^2}{\rightarrow} L_{t}\quad\text{as }\Delta \downarrow 0. \label{claim}\end{equation}

The first results on retrieving the increments of $L$ were given in \cite{bdy:2010}, and further generalized to the multivariate case by \cite{Eckard-noise}. 
The essential limitation of {this parametric method is that it might not be robust with respect to model misspecification. 
More precisely, the fact that a $\CARMA(p,q)$ process is $(p-q-1)$-times differentiable (see Proposition 3.32 of \cite{MaSt:2007}) is crucial for the procedure to work (cf. Theorem 4.3 of \cite{Eckard-noise}). However, if
the underlying process is instead $\CARMA(p',q')$ with $p'-q'\l p-q$, then some of the necessary derivatives do not exist anymore.}
In contrast to the aforementioned procedure, in the method we propose there is no need to specify the autoregressive and the moving average orders $p$ and $q$ in advance.


Before we start to prove the recovery result in Eq.~\eqref{claim}, let us establish the notion of invertibility in analogy to the discrete-time case.
\begin{defn}\label{def:invertible} A $\CARMA(p,q)$ process is said to be invertible if the roots of the moving average polynomial $b(\cdot)$ have  negative real parts, i.e. $\Re(\mu_i)>0$ for all $i=1,\dots,q$.
\end{defn}
\noindent
Our main theorem is the following. Its proof can be found in the appendix.
\begin{thm}\label{noise:extraction}
Let $Y$ be a finite-variance $\CARMA(p,q)$ process and $Z^\Delta$ the noise on the RHS of the sampled Eq.~\eqref{sampled:carma}. {Moreover, let Assumption \ref{assumption:causal}} hold and define $\bar{L}^\Delta:=\sqrt{\Delta}/\sigma_\Delta Z^\Delta$. Then, as $\Delta\downarrow 0$, 
\begin{equation}\label{claim2}
\sum_{i=1}^{\lfloor t/\Delta\rfloor}\bar{L}^\Delta_i\overset{L^2}{\rightarrow} L_t,\quad t\in(0,\infty),
\end{equation} if {and only if} the roots of the moving average polynomial $b(\cdot)$ on the RHS of the $\CARMA$ Eq.~\eqref{1.1} have negative real parts, i.e. if {and only if} the $\CARMA$ process is invertible.
\end{thm}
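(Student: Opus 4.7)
The plan is to estimate directly $E\!\left[(S_N-L_t)^2\right]$, where $S_N:=\sum_{i=1}^N\bar L^\Delta_i$ and $N:=\lfloor t/\Delta\rfloor$. Since $\{Z^\Delta_n\}$ is white noise with variance $\sigma^2_\Delta$, the scaled process $\{\bar L^\Delta_n\}$ is uncorrelated with $\var(\bar L^\Delta_n)=\Delta$, so $E[S_N^2]=N\Delta\to t$ and $E[L_t^2]=t$. Hence sufficiency in \eqref{claim2} reduces to showing that the cross term $E[S_NL_t]$ also converges to $t$; this is where all the real work is concentrated.

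\textbf{Step 2 (representing $\bar L^\Delta_i$ as an integral against $L$).} Under invertibility of $b(\cdot)$, Remark \ref{remarkSpecFact} together with Proposition \ref{rootsgreater1} forces $\zeta_k=1-\mu_k\Delta+o(\Delta)$ and $|\eta(\xi_i)|<1$, so for small $\Delta$ every root of $\Theta_\Delta(\cdot)$ lies strictly inside the unit disc. The sampled ARMA equation \eqref{sampled:carma} can then be inverted to yield $Z^\Delta_n=\sum_{j\ge 0}\pi^\Delta_j Y^\Delta_{n-j}$ with $\pi^\Delta(z)=\Phi_\Delta(z)/\Theta_\Delta(z)$. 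Substituting the CARMA moving-average representation \eqref{1.4} and interchanging sum and stochastic integral produces
\[
\bar L^\Delta_i=\int_{-\infty}^{i\Delta} k^\Delta(i\Delta-s)\,dL_s,\qquad k^\Delta(u):=\frac{\sqrt\Delta}{\sigma_\Delta}\sum_{j=0}^{\lfloor u/\Delta\rfloor}\pi^\Delta_j\,g(u-j\Delta),
\]
so that $\bar L^\Delta_i$ is an explicit kernel functional of the driving L\'evy process.

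\textbf{Step 3 (mass-one computation for the cross term).} Orthogonality of the L\'evy increments gives, for $i\le N$,
\[
E[\bar L^\Delta_iL_t]=\int_0^{i\Delta}k^\Delta(v)\,dv=:K^\Delta(i\Delta),\qquad E[S_NL_t]=\sum_{i=1}^N K^\Delta(i\Delta).
\]
Fubini yields $\int_0^\infty k^\Delta(v)\,dv=(\sqrt\Delta/\sigma_\Delta)\,\pi^\Delta(1)\int_0^\infty g(v)\,dv$. Using $\int_0^\infty g=\sigma b(0)/a(0)$ from \eqref{fourier}, the expansion $\Phi_\Delta(1)\sim(-\Delta)^p a_p$, and the asymptotics \eqref{MA:factors}--\eqref{geometric:roots}, \eqref{prediction:error} for $\Theta_\Delta(1)$ and $\sigma_\Delta^2$, an explicit multiplication shows the total mass equals $\Delta+o(\Delta)$, provided the combinatorial identity
\[
(2(p-q)-1)!\,\prod_{i=1}^{p-q-1}\eta(\xi_i)=\prod_{i=1}^{p-q-1}\bigl(1+\eta(\xi_i)\bigr)^2
\]
holds; this identity follows from the very definition of the $\xi_i$ as zeroes of $\alpha_{p-q-1}(\cdot)$ in \eqref{expansion:geometric:roots}. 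Exponential decay of $\pi^\Delta_j$, uniform in $\Delta$ and inherited from $|\zeta_k|<1$ and $|\eta(\xi_i)|<1$, then upgrades this to $K^\Delta(i\Delta)=\Delta+o(\Delta)$ uniformly on $i\Delta\in[\epsilon,t]$, with a negligible contribution from the finitely many boundary indices near $0$. Summing over $i\le N$ gives $E[S_NL_t]\to t$ and closes the sufficiency half.

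\textbf{Step 4 (necessity and main obstacle).} For the converse, the key observation is that the autocovariance structure of $Y^\Delta$ depends on $b(\cdot)$ only through $|b(-i\omega)|^2$, so it is invariant under replacing any $\mu_k$ by $-\mu_k$. Consequently the minimum-phase spectral factor $\Theta_\Delta$, the noise $Z^\Delta$, and hence $\bar L^\Delta$, coincide for $(a,b)$ and for the reflected pair $(a,\tilde b)$ whose roots $\tilde\mu_k$ all have positive real part. By Step 3 applied to the invertible representation, $\sum_{i=1}^N\bar L^\Delta_i\to \tilde L_t$ in $L^2$, where $\tilde L$ is the L\'evy process driving the invertible CARMA with polynomials $(a,\tilde b)$; if the original model is non-invertible, $\tilde L\neq L$ and \eqref{claim2} fails. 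The main technical obstacle in the whole argument is the mass-one reduction in Step 3: the algebraic cancellation of the spurious factors $(2(p-q)-1)!\prod\eta(\xi_i)$ against $\prod(1+\eta(\xi_i))^2$ is not visible from the surface form of \eqref{MA:factors}--\eqref{prediction:error} and must be extracted from \eqref{expansion:geometric:roots}, together with a careful Abel-type summation that controls the tails of $K^\Delta(i\Delta)$ uniformly in $\Delta$.
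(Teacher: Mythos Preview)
Your overall strategy coincides with the paper's: both expand $E[(S_N-L_t)^2]=2N\Delta-2E[S_NL_t]+o(1)$ and reduce sufficiency to showing that the cross term tends to $t$. The difference lies in how the cross term is controlled. The paper passes to the frequency domain via Plancherel, obtains pointwise convergence of the integrand (Lemma~\ref{convergence:integrand}, which uses exactly the combinatorial identity you isolate in Step~3), and then spends most of the effort on an integrable dominant function (Lemma~\ref{dominant:integrand}) so that the Dominated Convergence Theorem applies. Your proposal attempts the analogous step in the time domain via the ``mass-one plus uniform tail'' route.

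The genuine gap is in Step~3. The assertion ``exponential decay of $\pi^\Delta_j$, uniform in $\Delta$, inherited from $|\zeta_k|<1$ and $|\eta(\xi_i)|<1$'' is false. The coefficients $\eta(\xi_i)$ stay bounded away from~$1$, but $\zeta_k=1-\mu_k\Delta+o(\Delta)$, so $|\zeta_k|\to1$ as $\Delta\downarrow0$. Hence the radius of decay of $\pi^\Delta_j$ tends to~$1$: one only gets $|\pi^\Delta_j|\lesssim C(\Delta)e^{-cj\Delta}$, not $|\pi^\Delta_j|\lesssim C\rho^j$ with $\rho<1$ independent of $\Delta$. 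Upgrading the total-mass identity $K^\Delta(\infty)=\Delta+o(\Delta)$ to the pointwise statement $K^\Delta(i\Delta)=\Delta+o(\Delta)$ uniformly on $i\Delta\ge\varepsilon$ therefore requires controlling the tail $\int_{i\Delta}^\infty k^\Delta(v)\,dv$, where $k^\Delta$ is a convolution of $g$ with a sequence whose decay rate collapses with $\Delta$ and whose partial-fraction amplitudes (at the $\zeta_k$ poles) may blow up like $\Delta^{-1}$. This is precisely the uniform-in-$\Delta$ estimate that the paper establishes on the Fourier side through the long case-by-case bound of Lemma~\ref{dominant:integrand}; your ``Abel-type summation'' would have to reproduce that work, and nothing in the sketch indicates how. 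Note also that $K^\Delta(i\Delta)\neq\Delta+o(\Delta)$ for small $i$ when $p-q\ge2$, so the boundary layer is not literally ``finitely many'' indices independent of $\Delta$; one must quantify its width.

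Your necessity argument in Step~4 is more elegant than the paper's direct computation, but it is also incomplete. Replacing each $\mu_k$ with $\Re(\mu_k)<0$ by $-\mu_k$ does leave $|b(-i\omega)|^2$ and hence $Y^\Delta$, $\Theta_\Delta$, $Z^\Delta$ unchanged; and since the sufficiency proof uses only orthogonality of increments, it applies to the process $\tilde L$ with stationary orthogonal increments defined by the all-pass filter $b(-i\omega)/\tilde b(-i\omega)$ applied to $dL$. What is missing is the verification that $\tilde L_t\neq L_t$ in $L^2$, i.e.\ that $E[\tilde L_tL_t]<t$. The paper obtains this directly: the limiting cross term equals $\frac{1}{\pi t}\int_{\bbr}\frac{1-\cos(\omega t)}{\omega^2}(1+\Re D(\omega))\,d\omega$ with $D(\omega)=-1+\prod_{j\in J}\frac{\mu_j-i\omega}{-\mu_j-i\omega}$, and $\Re D(\omega)\le0$, $\not\equiv0$, forces the limit strictly below~$1$. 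Your argument would need an equivalent computation to close.
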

\begin{oss}
\begin{itemize}
\item[(i)] It is an easy consequence of the triangle and H\"older's inequality that, if the recovery result \eqref{claim2} holds, then also 
$$ \sum_{i=1}^{\lfloor t/\Delta\rfloor}\bar{L}^\Delta_i\sum_{j=\lfloor t/\Delta\rfloor+1}^{\lfloor s/\Delta\rfloor}\bar{L}^\Delta_j\overset{L^1}{\rightarrow} L_t(L_s - L_t),\quad t,s\in(0,\infty),\ t\leq s, $$ 
is valid.
\item[(ii)] Minor modifications of the proof of Theorem~\ref{noise:extraction} show that the recovery result in Eq.~\eqref{claim2} remains still valid if we drop the assumption of causality, Assumption~\ref{assumption:causal}(i), and suppose instead only $\Re(\la_j)\neq 0$ for every $j$. Hence, invertibility of the CARMA process is necessary for the noise recovery result to hold, whereas causality is not. Note that the white noise process in the non-causal case is not the same as in the Wold representation~\eqref{Wold}.
\item[(iii)] The necessity and sufficiency of the invertibility assumption descends directly from the fact that we choose always the minimum-phase spectral factor as pointed out in Remark~\ref{remarkSpecFact}. 
\item[(iv)] The proof of Theorem~\ref{noise:extraction} suggests that this procedure should work in a much more general framework. Let $I^\Delta(\cdot)$ denote the inversion filter in Eq.~\eqref{interpolated:noise} and $\psi^\Delta:=\left\{\psi^\Delta_i\right\}_{i\in\bbn}$ the coefficients in the Wold representation \eqref{Wold}. The proof essentially needs, apart from the rather technical Lemma \ref{dominant:integrand}, that, as $\Delta\downarrow 0$,
\begin{equation}
I^\Delta(e^{i\omega\Delta})\mathcal{F}\{g(\cdot)\}(\omega)=\frac{\int_{0}^\infty g(s)e^{i\omega s}ds}{\sum_{k=0}^\infty \psi^{\Delta}_ke^{i k \omega\Delta}}\rightarrow 1, \quad \omega \in\bbr,\label{invertibility}
\end{equation}
provided that the function $\sum_{k=0}^\infty\psi^\Delta_k z^k$ does not have any zero inside the unit circle. In other words, we need that the discrete Fourier transform in the denominator of Eq. \eqref{invertibility} converges to the Fourier transform in the numerator; this can be easily related to the kernel estimation result in Eq. \eqref{convergence:kernel}. Given the peculiar structure of $\CARMA$ processes, this relationship can be calculated explicitly, but the results should hold true for continuous-time moving average models with more general kernels, too.

\end{itemize}
%
\end{oss}
We illustrate Theorem \ref{noise:extraction} and the necessity of the invertibility assumption by an example where the convergence result is established using a time domain approach. That gives an explicit result also when the invertibility assumption is violated. 

Unfortunately this strategy is not  viable for a general $\CARMA$ process due to the complexity of involved calculations when $p$ is greater than two.
\begin{example}[$\CARMA(2,q)$ process]
The causal $\CARMA(2,q)$ process is the strictly stationary solution to the formal stochastic differential {equation}
\begin{align*}
(D-\lambda_2)(D-\lambda_1)Y_t&=\sigma DL_t,&q=0,\\
(D-\lambda_2)(D-\lambda_1)Y_t&=\sigma(b+D)DL_t,&q=1,
\end{align*}
where $\lambda_1,\lambda_2<0$, $\lambda_1\neq\lambda_2$ and $b\in\bbr\backslash\{0\}$.
It can be represented as a continuous-time moving average process as in Eq.~\eqref{1.4}, with kernel function
\begin{align*}
g(t)=&\sigma\frac{e^{t \lambda _1}-e^{t \lambda _2}}{\lambda _1-\lambda _2},&q=0,\\
g(t)=&\sigma\frac{b+\lambda _1}{\lambda _1-\lambda _2}e^{t \lambda _1} +\sigma\frac{b+\lambda _2}{\lambda _2-\lambda _1}e^{t \lambda _2},&q=1,
 \end{align*}
 {for $t>0$} and $0$ elsewhere.
The corresponding sampled process $Y^\Delta_n=Y_{n\Delta}$, $n\in\bbz$, satisfies the causal and invertible $\ARMA(2,1)$ equations as in \eqref{sampled:carma}.
From Eq. (27) of \cite{bfk:2011:1} we know for any $n\in\bbz$ that
\begin{align*} 
\Phi_\Delta(B)Y^\Delta_n&=\int_{{(n-1)\Delta}}^{n\Delta}{g({n\Delta}-u)dL_u}+\int_{{(n-2)\Delta}}^{(n-1)\Delta}[g(n\Delta-u)-(e^{\lambda_1\Delta}+e^{\lambda_2\Delta})g((n-1)\Delta-u)]dL_u.\end{align*}
The corresponding $\MA(1)$ polynomial in Eq. \eqref{sampled:carma} is $\Theta_\Delta(B)=1-\theta_\Delta B$, with asymptotic parameters
\begin{align*}
\theta_\Delta=\sqrt{3}-2+o(1),&\quad\sigma_\Delta^2=\sigma^2\Delta^3(2+\sqrt{3})/6+o(\Delta^3),&q=0,\\
\theta_\Delta=1-{\rm sgn}(b)\,b\,\Delta+o(\Delta),&\quad \sigma^2_\Delta=\sigma^2\Delta+o(\Delta),&q=1.
\end{align*}
Inversion of Eq.~\eqref{sampled:carma} 
gives, for every $\Delta>0$,
\begin{align}
Z^\Delta_n=&\frac{\Phi_\Delta(B)}{\Theta_\Delta(B)}Y^\Delta_n=\sum_{i=0}^\infty (\theta_\Delta B)^i\prod_{i=1}^2(1-e^{\lambda_i\Delta}B)Y^{\Delta}_n,
\notag\\
=&\int_{{(n-1)\Delta}}^{n\Delta}{g({n\Delta}-u)dL_u} \nonumber\\
&\quad+\sum_{i=0}^\infty\theta_\Delta^i\int_{{(n-i-2)\Delta}}^{(n-i-1)\Delta}[ g((n-i)\Delta-u)-(e^{\lambda_1\Delta}+e^{\lambda_2\Delta}-\theta_\Delta)g((n-i-1)\Delta-u)]dL_u.\notag
\end{align} 
The sequence $Z^\Delta:=\{Z^\Delta_n\}_{n\in\bbz}$ is a weak white noise process.
Moreover, using $\Delta L_n=\int^{n\Delta}_{(n-1)\Delta}dL_s$, we observe that 
\begin{equation}\label{crossvariance:noise}
\bbe[Z^\Delta_n\, \Delta L_{n-j}]=\left\{\begin{array}{lc}
0,&j<0,\\
\int_{0}^\Delta g(s)ds,&j=0,\\
\theta_\Delta^{j-1}\int_{0}^\Delta [g(\Delta+s)-(e^{\lambda_1\Delta}+e^{\lambda_2\Delta}-\theta_\Delta)g(s)]ds,&j\g 0.
\end{array}
\right.
\end{equation}
For any fixed {$t\in(0,\infty)$}, since $\Delta L$ and $\bar{L}^\Delta$ are both second-order stationary white noises with variance $\Delta$, we obtain that
\begin{align*}
\bbe\left[\sum_{i=1}^{\lfloor t/\Delta\rfloor}(\bar{L}^\Delta_i-\Delta L_i)\right]^2&=2\lfloor t/\Delta\rfloor\Delta-2\sum_{i=1}^{\lfloor t/\Delta\rfloor}\bbe[\bar{L}^\Delta_i\Delta L_i]-2\sum_{i\neq j}\bbe[\bar{L}^\Delta_i\Delta L_j]\\
&= 2\lfloor t/\Delta\rfloor\Delta-\frac{2\sqrt{\Delta}}{\sigma_\Delta}\lfloor t/\Delta\rfloor\int_{0}^\Delta g(s)ds \notag\\
&\qquad -\frac{2\sqrt{\Delta}}{\sigma_\Delta}\int_{0}^\Delta [g(\Delta+s)-(e^{\lambda_1\Delta}+e^{\lambda_2\Delta}-\theta_\Delta)g(s)]ds\sum_{i=1}^{\lfloor t/\Delta\rfloor}\sum_{j=1}^{i-1}\theta_\Delta^{j-1},
\end{align*}
where the last equality is deduced from Eq.~\eqref{crossvariance:noise}. For every {$a\neq 1$},
$$\sum_{i=1}^n\sum_{j=1}^{i-1}a^{j-1}=\frac{a^n+(1-a) n-1}{(1-a)^2},\quad n\in\bbn,$$
and the variance of the error can be explicitly calculated as
\begin{align*}\bbe&\left[\sum_{i=1}^{\lfloor t/\Delta\rfloor}(\bar{L}^\Delta_i-\Delta L_i)\right]^2=2\lfloor t/\Delta\rfloor\Delta-\frac{2\sqrt{\Delta}}{\sigma_\Delta}\lfloor t/\Delta\rfloor\int_{0}^\Delta g(s)ds \notag\\
&\quad-\frac{2\sqrt{\Delta}}{\sigma_\Delta}\frac{{\theta_\Delta}^{\lfloor t/\Delta\rfloor}+\lfloor t/\Delta\rfloor(1-\theta_\Delta )-1}{(1-\theta_\Delta)^2}\int_{0}^\Delta [g(\Delta+s)-(e^{\lambda_1\Delta}+e^{\lambda_2\Delta}-\theta_\Delta)g(s)]ds.
\end{align*}
We now compute the asymptotic  expansion for $\Delta\downarrow0$ of the equation above. We obviously have that $2\lfloor t/\Delta\rfloor\Delta=2t(1+o(1))$ and, using the explicit formulas for the kernel functions $g$,
{\footnotesize $$\begin{array}{r|l|l}
&\underline{q=0}&\underline{q=1}\\
\frac{2\sqrt{\Delta}}{\sigma_\Delta}\lfloor t/\Delta\rfloor\int_{0}^\Delta g(s)ds=&\left(3-\sqrt{3}\right)t+o(1),&2t+o(1),\\
\frac{2\sqrt{\Delta}}{\sigma_\Delta}\int_{0}^\Delta [g(\Delta+s)-(e^{\lambda_1\Delta}+e^{\lambda_2\Delta}-\theta_\Delta)g(s)]ds=&\left(4 \sqrt{3}-6\right) \Delta(1+o(1)),&2 (b-{{\rm sgn}(b)}\,b) \Delta ^2+o(\Delta ^2),\\
({\theta_\Delta}^{\lfloor t/\Delta\rfloor}+\lfloor t/\Delta\rfloor(1-\theta_\Delta )-1){(1-\theta_\Delta)^{-2}}=&\frac{1}{6} \left(3+\sqrt{3}\right)t/\Delta(1+o(1)),&(e^{-{{\rm sgn}(b)}b t}+{{\rm sgn}(b)} bt-1)/( b \Delta)^2+o(\Delta^{-2}).
\end{array}$$}
{Hence}, for a fixed {$t\in(0,\infty)$} and $\Delta\downarrow0$, we {get}
$$\bbe\left[\sum_{i=1}^{\lfloor t/\Delta\rfloor}(\bar{L}^\Delta_i-\Delta L_i)\right]^2=
\left\{\begin{array}{cc}
o(1),&q=0,\\
2 (e^{-{{\rm sgn}(b)}b t}+{{\rm sgn}(b)}bt-1) {({\rm sgn}(b)-1)/b} + o(1),&q=1,
\end{array}\right.
$$
i.e. \eqref{claim2} holds always for $q=0$, {whereas} for $q=1$ if and only if {$b>0$}. If {$b<0$}, the error made by approximating the driving L\'{e}vy by inversion of the discretised process grows as  $4t$ for large $t$. \end{example}
\section{High-frequency behaviour of approximating {Riemann} sums}\label{riemann:asympt}
The fact that, in the sense of Eq.~\eqref{claim}, $\bar{L}_n^\Delta\approx \Delta L_n = L_{n\Delta}-L_{(n-1)\Delta}$ for small $\Delta$, along with Eq.~\eqref{convergence:kernel}, gives {rise to the conjecture} 
that the  Wold representation 
for ${Y}^{\Delta}$ behaves {on a high-frequency time grid} approximately like 
the $\MA(\infty)$ process
\begin{equation}\label{Riemann:approx}\tilde{Y}^{\Delta,h}_n:=\sum_{j=0}^\infty g(\Delta(j+h))\Delta L_{n-j},\quad n\in \bbz,\end{equation}
with some $h\in[0,1]$ and {$g$ is the kernel function as in \eqref{jordan:0}}. 
{In other terms}, we have for a  $\CARMA$ process, under the assumption of invertibility and causality, that the discrete-time quantities appearing in the Wold representation 
approximate the quantities in Eq.~\eqref{Riemann:approx} when $\Delta\downarrow 0$.
The transfer function of Eq.~\eqref{Riemann:approx} is defined as 
\begin{equation}\psi^{\Delta}_h(\omega):=\sum_{j=0}^\infty g(\Delta(j+h))e^{-i\omega j},\quad -\pi\leq\omega\leq\pi,\label{Riemann}\end{equation}
 and its spectral density can be written as 
$$\tilde{f}^\Delta_h(\omega)=\frac{{1}}{2\pi}|\psi^{\Delta}_h|^2(\omega),\quad   -\pi\leq\omega\leq\pi.$$
{It is well known} that a {CMA process} can be defined {(for a fixed time point $t$)} as the $L^2$-limit of Eq.~\eqref{Riemann:approx}; this fact is naturally employed to simulate a CMA model when all the relevant quantities are known a priori.
Therefore, we call $\tilde{Y}^{\Delta,h}$ \emph{approximating {Riemann} sum} of Eq.~\eqref{1.4}, and $h$ is said to be the \emph{rule} of the approximating sum. If, for instance, $h$ is chosen to be $1/2$, we have the popular  \emph{mid-point rule}.
%
\begin{oss} \label{other integration rules}
 \begin{enumerate}
  \item[(i)] It would be possible to consider more sophisticated integration rules by taking more nodes on every interval of length $\Delta$ and suitable weights. However, since mostly used in practice, we decided
   to concentrate on that ``simple'' Riemann sum approximation. 
  \item[(ii)] In practice, when considering simulation studies for instance, one has to use a finite (truncated) Riemann sum of the form
   $$\tilde{Y}_{n,\,N}^{\Delta,\,h} := \sum_{j=0}^N g(\Delta (j+h))\,\Delta L_{n-j},$$
   where $N\in\mathbb{N}$ is usually taken as a large number. If we let $N = N(\Delta) \to \infty$ as $\Delta \to 0$ with a suitable rate 
   ($N(\Delta)$ should diverge faster than $\Delta$ goes to $0$, e.g.~$N(\Delta) = \Delta^{-(1+\varepsilon)}$), the main result of this section, Corollary \ref{cor:sum}, remains valid.	
 \end{enumerate}
\end{oss}
To give an answer to our conjecture, we investigate 
{properties} of the approximating {Riemann} sum $\tilde{Y}^{\Delta,h}$ of a $\CARMA$ process {and compare its} asymptotic {autocovariance} structure 
with the one of the sampled CARMA sequence $Y^\Delta$. 
This yields more 
insight {into} the role of $h$ {for} the behaviour of $\tilde{Y}^{\Delta,h}$ as a process. 

We {start with a} 
well-known property of approximating sums.
\begin{prop}\label{L2convergence} 
Let $g$ be in  $L^2$ and Riemann-integrable. Then, for every $h\in[0,1]$, as $\Delta\downarrow 0$:
\begin{itemize} \item[(i)] $\tilde{Y}^{\Delta,h}_{k}-Y^\Delta_k\overset{L^2}{\rightarrow} 0$, for every $k\in\bbz$.
\item[(ii)] $\tilde{Y}^{\Delta,h}_{\lfloor t/\Delta\rfloor}\overset{L^2}{\rightarrow}Y_t$, for every $t\in\bbr$.
\end{itemize}
\end{prop}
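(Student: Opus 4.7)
My plan is to use the Itô/Wiener isometry for second-order Lévy integrals to reduce both claims to deterministic $L^2$-approximation statements, exploiting the fact that the only randomness in either process comes from orthogonal (in the $L^2$ sense) increments of $L$ with unit variance per unit time.

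For (i), I would write both processes as a sum of integrals against $L$ over the same disjoint intervals:
\begin{equation*}
Y_{n\Delta} = \sum_{j=0}^\infty \int_{(n-j-1)\Delta}^{(n-j)\Delta} g(n\Delta - u)\, dL_u,
\qquad
\tilde{Y}^{\Delta,h}_n = \sum_{j=0}^\infty \int_{(n-j-1)\Delta}^{(n-j)\Delta} g(\Delta(j+h))\, dL_u.
\end{equation*}
The orthogonality of the increments of $L$ and $\bbe L_1^2=1$ then yield
\begin{equation*}
\bbe\bigl[(\tilde{Y}^{\Delta,h}_n - Y_{n\Delta})^2\bigr]
 = \sum_{j=0}^\infty \int_{(n-j-1)\Delta}^{(n-j)\Delta}\bigl[g(n\Delta-u)-g(\Delta(j+h))\bigr]^2 du
 = \|g-g^\Delta\|_{L^2(\bbr)}^2,
\end{equation*}
where after the change of variables $s=n\Delta-u$ the function $g^\Delta$ is the step function equal to $g(\Delta(j+h))$ on $[j\Delta,(j+1)\Delta)$ for $j\geq 0$ and to $0$ on $(-\infty,0)$. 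In particular, the bound does not depend on $n$, which will be important for (ii).

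Next I would prove $\|g-g^\Delta\|_{L^2}\to 0$ as $\Delta\downarrow 0$. Given $\eps>0$, pick $N>0$ with $\int_N^\infty g^2 < \eps$. On the compact interval $[0,N+1]$, Riemann-integrability of $g$ forces it to be bounded and Lebesgue-a.e.\ continuous, so $g^\Delta\to g$ a.e.\ on $[0,N]$ and dominated convergence (with dominating function the finite constant $\sup_{[0,N+1]}|g|$) yields $\int_0^N (g-g^\Delta)^2 ds \to 0$. For the tail I would estimate
\begin{equation*}
\int_N^\infty (g-g^\Delta)^2 ds \le 2\int_N^\infty g^2 ds + 2\sum_{j:\,j\Delta\ge N}\Delta\, g(\Delta(j+h))^2,
\end{equation*}
and observe that the latter Riemann sum tends to $2\int_N^\infty g^2 ds<2\eps$ as $\Delta\downarrow 0$, again by Riemann-integrability of $g^2$. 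Combining these estimates gives (i).

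For (ii) I would decompose
\begin{equation*}
\tilde{Y}^{\Delta,h}_{\lfloor t/\Delta\rfloor}-Y_t=\bigl(\tilde{Y}^{\Delta,h}_{\lfloor t/\Delta\rfloor}-Y_{\lfloor t/\Delta\rfloor\Delta}\bigr)+\bigl(Y_{\lfloor t/\Delta\rfloor\Delta}-Y_t\bigr).
\end{equation*}
The first term vanishes in $L^2$ by (i), since the bound derived above is uniform in the time index. For the second, the Itô isometry and translation-invariance of Lebesgue measure give $\bbe[(Y_{\lfloor t/\Delta\rfloor\Delta}-Y_t)^2]=\int_\bbr [g(r+\lfloor t/\Delta\rfloor\Delta-t)-g(r)]^2 dr$, which tends to $0$ because $\lfloor t/\Delta\rfloor\Delta-t\to 0$ and translation is continuous in $L^2(\bbr)$. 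The main obstacle is the $L^2$-convergence of the step approximation $g^\Delta$ globally on $\bbr$: Riemann-integrability on compacts together with $g\in L^2$ does not automatically yield global convergence, and one has to combine a dominated-convergence argument on $[0,N]$ with a careful tail bound, as sketched above.
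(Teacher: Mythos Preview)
Your proof is correct and is exactly the natural unpacking of the paper's own argument, which consists of the single sentence ``This follows immediately from the hypotheses made on $g$ and the definition of $L^2$-integrals'': you use the $L^2$-isometry to reduce to $\|g-g^\Delta\|_{L^2}\to 0$ and then invoke Riemann-integrability, which is precisely what that sentence is pointing to. If anything you are more careful than the paper, since you correctly isolate the tail control of the step approximation as the one non-obvious point and handle it explicitly.
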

\begin{proof}
{This follows immediately from the hypotheses made on $g$ and the definition of $L^2$-integrals.}
\end{proof}
\noindent
This result essentially says only that  approximating sums converge to $Y_t$ for every fixed time point $t$. However, for a $\CARMA(p,q)$ process we have that the {approximating Riemann sum process satisfies for every $h$ and $\Delta$ an $\ARMA(p,p-1)$ equation {(see Proposition \ref{prop:Riemann:arma} below)}. This means that there might exist a process whose autocorrelation structure is the same {as the one} of the {approximating sum}. 
Given 
{that the AR filter in this representation is the same as in Eq.~\eqref{sampled:carma}}, it is reasonable to investigate whether $\Phi_\Delta(B)Y^{\Delta}$ and $\Phi_\Delta(B)\tilde{Y}^{\Delta,h}$ have, as $\Delta\downarrow 0$, 
the same asymptotic autocovariance structure, which can be expected but is not granted by Proposition \ref{L2convergence}.

The following proposition states the ARMA($p, p-1$) representation for the approximating Riemann sum.
\begin{prop}\label{prop:Riemann:arma} Let $Y$ be a $\CARMA(p,q)$ process, satisfying Assumption \ref{assumption:causal}. Furthermore, suppose that the roots of the autoregressive polynomial $a(\cdot)$ are distinct. The approximating {Riemann} sum process $\tilde{Y}^{\Delta,h}$ of $Y$ defined by Eq.~\eqref{Riemann:approx} satisfies, for every $h\in[0,1]$, the $\ARMA(p,p-1)$ equation 
\begin{equation}\label{Riemann:arma}
\Phi_\Delta(B)\tilde Y^{\Delta,h}_n=\sigma\tilde{\Theta}_{\Delta,h}(B)\Delta L_n,\quad n\in\bbz,
\end{equation}
where 
\begin{equation}\tilde{\Theta}_{\Delta,h}(z):=\tilde{\theta}^{\Delta,h}_0-\tilde{\theta}^{\Delta,h}_{1}z+-\ldots+(-1)^{p-1}\tilde{\theta}^{\Delta,h}_{p-1}z^{p-1}\label{Riemann:MA}\end{equation}
and 
$$\tilde{\theta}^{\Delta,h}_k:={\sum_{l=1}^p \frac{b(\lambda_l)}{a'(\lambda_l)} e^{h \Delta  \lambda_l}\sum e^{\Delta(\lambda_{j_1}+\lambda_{j_2}+\ldots+\lambda_{j_{k}})}},\quad k=0,\ldots,p-1.$$
The right-hand sum is defined to be one for $k=0$ and it is evaluated over all possible subsets $\{j_1,\ldots,j_{k}\}$ of $\{1,\ldots,p\}\backslash\{l\}$ with cardinality $k$, if $k\g0$.
\end{prop}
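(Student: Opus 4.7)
The plan is to exploit the fact that, since the eigenvalues $\lambda_1,\ldots,\lambda_p$ are distinct, the kernel $g$ admits an explicit partial-fractions (residue) decomposition. Evaluating the residue of $e^{tz}b(z)/a(z)$ at the simple pole $z=\lambda_l$ gives $b(\lambda_l)/a'(\lambda_l)\,e^{t\lambda_l}$, so by Eq.~\eqref{jordan},
\begin{equation*}
g(t)=\sigma\sum_{l=1}^p \frac{b(\lambda_l)}{a'(\lambda_l)}\,e^{t\lambda_l},\qquad t>0.
\end{equation*}
Substituting $t=\Delta(j+h)$ and setting $c_l:=\frac{b(\lambda_l)}{a'(\lambda_l)}e^{h\Delta\lambda_l}$, $\alpha_l:=e^{\Delta\lambda_l}$, I would rewrite the Riemann sum in Eq.~\eqref{Riemann:approx} as
\begin{equation*}
\tilde{Y}^{\Delta,h}_n=\sigma\sum_{l=1}^p c_l\,U^{(l)}_n,\qquad U^{(l)}_n:=\sum_{j=0}^\infty \alpha_l^{\,j}\,\Delta L_{n-j}.
\end{equation*}
Absolute convergence of the interchanged sums is guaranteed because $|\alpha_l|=e^{\Delta\Re(\lambda_l)}<1$ by Assumption~\ref{assumption:causal}(i), so each $U^{(l)}$ is a well-defined (complex-valued) causal AR(1) satisfying $(1-\alpha_l B)U^{(l)}_n=\Delta L_n$.

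Next, I would apply the autoregressive filter $\Phi_\Delta(B)=\prod_{i=1}^p(1-\alpha_i B)$ component-wise. The key cancellation is that the factor $(1-\alpha_l B)$ telescopes away when acting on $U^{(l)}$, leaving
\begin{equation*}
\Phi_\Delta(B)\tilde{Y}^{\Delta,h}_n=\sigma\sum_{l=1}^p c_l\prod_{i\neq l}(1-\alpha_i B)\,\Delta L_n.
\end{equation*}
Expanding the product via elementary symmetric polynomials,
\begin{equation*}
\prod_{i\neq l}(1-\alpha_i B)=\sum_{k=0}^{p-1}(-1)^k B^k\!\!\sum_{\substack{S\subset\{1,\ldots,p\}\setminus\{l\}\\|S|=k}}\prod_{j\in S}\alpha_j,
\end{equation*}
and swapping the order of summation identifies the coefficient of $(-B)^k\Delta L_n$ as exactly $\tilde{\theta}^{\Delta,h}_k$ in the statement. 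This yields the claimed representation~\eqref{Riemann:arma} with MA polynomial~\eqref{Riemann:MA}.

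The verification is essentially algebraic and the only subtlety is to justify the manipulation of infinite series: each $U^{(l)}_n$ is in $L^2$ by geometric summability of $|\alpha_l|^{2j}$ together with $\bbe|\Delta L_n|^2=\Delta$, and the finite linear combination is $L^2$-convergent, so commuting the finite-order polynomial filter $\Phi_\Delta(B)$ with the infinite series is legitimate. I do not foresee a genuine obstacle; the main thing to keep clean is the bookkeeping of signs and indices when re-indexing the double sum over $l$ and $S$ to match the form stated in the proposition, and noting that for $k=0$ the inner sum is empty (conventionally equal to $1$), producing $\tilde{\theta}^{\Delta,h}_0=\sum_{l=1}^p\frac{b(\lambda_l)}{a'(\lambda_l)}e^{h\Delta\lambda_l}$.
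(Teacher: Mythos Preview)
Your argument is correct and, in fact, more self-contained than the paper's. The paper proceeds via the state-space representation $g(t)=\sigma\mathbf{b}^\top e^{At}\mathbf{e}_p\mathbf{1}_{(0,\infty)}(t)$: it expands $\Phi_\Delta(B)\tilde Y^{\Delta,h}_n$ as a double sum in the matrix exponential, splits off the terms with lag $k\geq p$, and kills that infinite tail using the Cayley--Hamilton theorem applied to $e^{A\Delta}$ (i.e.\ $\sum_{j=0}^p\phi_j^\Delta e^{-Aj\Delta}=0$). The identification of the surviving finite sum with the stated coefficients $\tilde\theta_k^{\Delta,h}$ is then deferred to an external reference (\cite[Lemma~2.1(i) and Eq.~(4.4)]{FasenFuchs2012}). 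Your route instead exploits the distinct-roots hypothesis directly through the scalar residue decomposition of $g$, writes $\tilde Y^{\Delta,h}$ as a finite superposition of causal AR$(1)$ processes, and lets the factor $(1-\alpha_lB)$ of $\Phi_\Delta(B)$ annihilate each component; the elementary-symmetric-polynomial expansion then produces $\tilde\theta_k^{\Delta,h}$ without any appeal to Cayley--Hamilton or outside lemmata. The paper's matrix approach would adapt more readily to repeated autoregressive roots (Jordan blocks), whereas your argument is tied to simple poles---but since the proposition explicitly assumes distinct $\lambda_i$, this is no loss, and your version has the advantage of being entirely elementary.
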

\begin{proof}
Write $\Phi_\Delta(z) = \prod_{j=1}^p(1-e^{\Delta\la_j}z) = -\sum_{j=0}^p\phi_j^\Delta z^j$ and observe that 
\begin{eqnarray*}
 \lefteqn{\Phi_\Delta(B)\tilde Y^{\Delta,h}_n = -\sum_{j=0}^p\phi_j^\Delta Y^{\Delta,h}_{n-j}} \\
  &&= -\sigma\mathbf{b^\top}\sum_{k=0}^{p-1}\left(\sum_{j=0}^k\phi_j^\Delta e^{A(k-j)\Delta}\right)e^{Ah\Delta}\mathbf{e}_{p}\cdot\Delta L_{n-k} \\
  &&\qquad\qquad-\sigma\mathbf{b^\top}\sum_{j=0}^p\,\sum_{k=p-j}^{\infty}\phi_j^\Delta e^{A(h+k)\Delta}\mathbf{e}_{p}\cdot\Delta L_{n-j-k} \\
  &&= -\sigma\mathbf{b^\top}\sum_{k=0}^{p-1}\left(\sum_{j=0}^k\phi_j^\Delta e^{A(k-j)\Delta}\right)e^{Ah\Delta}\mathbf{e}_{p}\cdot\Delta L_{n-k} \\
  &&\qquad\qquad+\sigma\mathbf{b^\top}\sum_{k=p}^\infty\left(-\sum_{j=0}^{p} \phi_j^\Delta e^{-Aj\Delta}\right)e^{A(h+k)\Delta}\mathbf{e}_{p}\cdot\Delta L_{n-k}.
\end{eqnarray*}
By virtue of the Cayley-Hamilton Theorem (cf. also \cite[proof of Lemma 2.1]{BrLi}), we have that 
$$ -\sum_{j=0}^{p} \phi_j^\Delta e^{-Aj\Delta} = 0,$$
and hence, $\Phi_\Delta(B)\tilde Y^{\Delta,h}_n = -\sigma\mathbf{b^\top}\sum_{k=0}^{p-1}\left(\sum_{j=0}^k\phi_j^\Delta e^{A(k-j)\Delta}\right)e^{Ah\Delta}\mathbf{e}_{p}\cdot\Delta L_{n-k}$. We conclude with \cite[Lemma 2.1(i) and Eq.~(4.4)]{FasenFuchs2012}.
\end{proof}
\begin{oss}
\begin{enumerate}
\item[(i)]
The approximating Riemann sum of a causal $\CARMA$ process is automatically a causal $\ARMA$ process. On the other hand, even if the $\CARMA$ model is invertible in the sense of Definition \ref{def:invertible}, the roots of $\tilde{\Theta}_{\Delta,h}(\cdot)$ may lie inside the unit circle, causing $\tilde{Y}^{\Delta,h}$ to be non-invertible.
\item[(ii)]
It is easy to see that $\tilde{\theta}^{\Delta,h}_0=g(h\Delta)$. If {$p-q\geq 2$} and $h=0$, {we have that} $\tilde{\theta}^{\Delta,0}_0=0$, giving that $\tilde{\Theta}_{\Delta,0}(0)=0$. This is never the case for $\Theta_\Delta(\cdot)$ as one can see from Eq. \eqref{MA:factors} and Proposition \ref{rootsgreater1}.
Moreover, it is possible to show that for $h=1$ and {$p-q\geq 2$}, {the coefficient} $\tilde{\theta}^{\Delta,1}_{p-1}$ {is equal to $0$}, implying that \eqref{Riemann:arma} is actually an $\ARMA(p,p-2)$ equation. For those values of $h$, the $\ARMA$ equations solved by the approximating Riemann sums 
can never have the same asymptotic form {as} Eq.~\eqref{sampled:carma}. Therefore, we restrict ourselves to the case $h\in(0,1)$ from now on.
\item[(iii)] {The assumption of distinct autoregressive roots might seem restrictive, but the omitted cases can be obtained by letting distinct roots tend to each other. This would, of course, change the coefficients of the MA polynomial in Eq.~\eqref{Riemann:MA}.
Moreover, as shown in \cite{bfk:2011:2,bfk:2011:1}, the multiplicity of the zeroes does not matter when $L^2$-asymptotic relationships as $\Delta\downarrow0$ are considered.}
\end{enumerate}
\end{oss}

Due to the complexity of retrieving the roots of a polynomial of arbitrary order from its coefficients, 
{finding} the asymptotic expression of  $\tilde{\Theta}_{\Delta,h}(\cdot)$ for arbitrary $p$ 
is a daunting task. Nonetheless, by using Proposition \ref{prop:Riemann:arma}, it is not difficult to give an answer for processes with $p\leq 3$, which are the most used in practice. 
\begin{prop}\label{thm:Riemann:sum:h} Let $\tilde{Y}^{\Delta,h}$ be the approximating {Riemann} sum for a $\CARMA(p,q)$ process, suppose $p\leq3$, and {let Assumption \ref{assumption:causal} hold and  the roots of $a(\cdot)$ be distinct}.

If $p=1$, {the process} $\tilde{Y}^{\Delta,h}$ is {an} $\AR(1)$ process driven by $Z_n^{\Delta}={\sigma}e^{\Delta h\lambda_1}\Delta L_n$. 
If $p=2,3,$ we have
\begin{equation}\Phi_\Delta(B)\tilde{Y}^{\Delta,h}_n=\prod_{i=1}^{q}(1-(1-\Delta \mu_i +o(\Delta))B)\prod_{i=1}^{p-q-1}(1-\chi_{p-q,i}(h)B)\left(\sigma\frac{(h\Delta)^{p-q-1}}{(p-q-1)!}\Delta L_n\right),\label{MA:riemann:factors}\end{equation}
where, for $h\in(0,1)$ {and $\Delta\downarrow 0$}, 
\begin{align*}
 \chi_{2,1}(h) &=\frac{h-1}{h}+{o(1)}\quad\text{ and} \\
 \chi_{3,j}(h) &=\frac{2 (h-1)^2}{2 (h-1) h-1-(-1)^j\sqrt{1-4 (h-1) h}}+o(1),\quad j=1,2.
\end{align*}
\end{prop}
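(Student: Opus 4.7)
The plan is to start from the explicit MA coefficients in Proposition~\ref{prop:Riemann:arma}, which, upon collecting powers of $z$, admit the compact Lagrange-type form
\[
\tilde\Theta_{\Delta,h}(z)=\sum_{l=1}^{p}\frac{b(\lambda_l)\,e^{h\Delta\lambda_l}}{a'(\lambda_l)}\prod_{j\neq l}\bigl(1-e^{\Delta\lambda_j}z\bigr).
\]
I would then Taylor expand each $\tilde\theta_k^{\Delta,h}$ in $\Delta$. The prefactor $\sigma(h\Delta)^{p-q-1}/(p-q-1)!$ in~\eqref{MA:riemann:factors} emerges from the identity $\sigma\tilde\theta_0^{\Delta,h}=g(h\Delta)$ together with the fact that $\mathbf{b}^{\top}A^k\mathbf{e}_p=0$ for $k<p-q-1$ and $\mathbf{b}^{\top}A^{p-q-1}\mathbf{e}_p=1$, which give $g(t)=\sigma t^{p-q-1}/(p-q-1)!+o(t^{p-q-1})$ as $t\downarrow 0$. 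The case $p=1$ is immediate from $g(t)=\sigma e^{\lambda_1 t}$.

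For $p\in\{2,3\}$ I would treat each pair $(p,q)$ separately. For every $i=1,\ldots,q$ the factor $(\lambda_l+\mu_i)$ hidden in $b(\lambda_l)$ forces, after normalisation by $\tilde\theta_0^{\Delta,h}$, a cancellation (equivalent to the vanishing of the divided difference of the polynomial $b(z)/(z+\mu_i)$ of degree $q-1<p-1$ at $\lambda_1,\ldots,\lambda_p$) that produces an MA factor of the form $1-(1-\Delta\mu_i+o(\Delta))z$; this is verified by writing the two- or three-term sum explicitly, matching the constant term to $1$ and reading off the coefficient of $\Delta$. In the single-spurious-root cases $(p,q)=(2,0)$ and $(3,1)$, one further expansion to leading order (combined, in the case $(3,1)$, with dividing out the true-root factor, or equivalently with the quadratic formula whose discriminant evaluates to $1$) gives the ratio $(h-1)/h$, hence $\chi_{2,1}(h)$. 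The cases $(2,1)$ and $(3,2)$ have no spurious root, so only the true-root step is required.

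The main obstacle is the case $p=3$, $q=0$, in which two spurious roots must be extracted from a quadratic whose limiting coefficients depend on~$h$. Here I would rewrite $\tilde\theta_k^{\Delta,h}$ in terms of divided differences of $z\mapsto e^{\alpha\Delta z}$ at $\lambda_1,\lambda_2,\lambda_3$: directly for $k=0$ (with $\alpha=h$) and for $k=2$ (with $\alpha=h-1$, using $e_2(\{e^{\Delta\lambda_j}\}_{j\neq l})=e^{\Delta(\lambda_1+\lambda_2+\lambda_3-\lambda_l)}$), and via splitting $\sum_{j\neq l}e^{\Delta\lambda_j}=S-e^{\Delta\lambda_l}$ for $k=1$ (with $\alpha\in\{h,h+1\}$). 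Each such divided difference behaves like $\alpha^2\Delta^2/2$ by the Hermite--Genocchi formula, so after dividing by $\tilde\theta_0^{\Delta,h}\sim(h\Delta)^2/2$ the limiting normalised MA polynomial is $1-\frac{2h^2-2h-1}{h^2}z+\frac{(h-1)^2}{h^2}z^2$. Applying the quadratic formula and observing that its discriminant collapses to $1-4h(h-1)$ via the identity $(u-1)^2-u^2=1-2u$ with $u=2h(h-1)$, the reciprocals of the roots yield the announced expressions for $\chi_{3,j}(h)$. The only subtlety is to check that the $o(1)$-remainders in the coefficient ratios do not disturb the leading order of the roots, which follows from the non-degeneracy of the limiting quadratic for $h\in(0,1)$ (the leading coefficient $(h-1)^2/h^2$ being strictly positive there) together with continuous dependence of roots on coefficients.
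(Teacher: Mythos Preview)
Your proposal is correct and follows essentially the same strategy as the paper: start from the $\ARMA(p,p-1)$ representation of Proposition~\ref{prop:Riemann:arma}, use $\tilde\theta_0^{\Delta,h}=g(h\Delta)\sim\sigma(h\Delta)^{p-q-1}/(p-q-1)!$ to fix the prefactor, and then compute the (at most two) roots of $\tilde\Theta_{\Delta,h}$ explicitly and Taylor expand as $\Delta\downarrow0$. The paper's proof is terse at the computational step (``its roots, if any, can be calculated from the coefficients and asymptotic expressions can be obtained by computing the Taylor expansions of the roots around $\Delta=0$'') and then invokes Vieta to normalise; your route via the Lagrange form $\tilde\Theta_{\Delta,h}(z)=\sum_l\frac{b(\lambda_l)e^{h\Delta\lambda_l}}{a'(\lambda_l)}\prod_{j\neq l}(1-e^{\Delta\lambda_j}z)$ and divided differences/Hermite--Genocchi is a cleaner way to extract the leading coefficients, particularly in the $(p,q)=(3,0)$ case, and your explicit check that the limiting quadratic is non-degenerate on $(0,1)$ justifies the $o(1)$ passage that the paper leaves implicit.
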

\begin{proof}
The polynomial $\tilde{\Theta}_{\Delta,h}(z)$ is of order $p-1$. 
Since $p\leq 3$, its roots, if any, can be calculated from the coefficients and asymptotic expressions can be obtained by computing the Taylor {expansions} of the roots around $\Delta=0$.

If $p=1$, the statement follows directly from Eq.~\eqref{Riemann:arma}. For $p=2,3$, the roots of Eq.~\eqref{Riemann:MA} are $\{1+\Delta\mu_i+o(\Delta)\}_{i=1,\ldots,q}$ and $\{1/\chi_{{p-q},i}(h)\}_{i=1,\ldots {p-q-1}}$, giving that
$$\tilde{\Theta}_{\Delta,h}(z)=\tilde{\theta}^{\Delta,h}_{p-1}\prod_{i=1}^{q}(1+\Delta \mu_i +o(\Delta)-z)\prod_{i=1}^{p-q-1}(1/\chi_{p-q,i}(h)-z),\quad z\in\bbc.$$
Vieta's Theorem shows that the product of the roots must be {equal to} $\tilde{\theta}^{\Delta,h}_0/\tilde{\theta}^{\Delta,h}_{p-1}$, {which yields}
$$\tilde{\Theta}_{\Delta,h}(z)=\tilde{\theta}^{\Delta,h}_0\prod_{i=1}^{q}(1-(1-\Delta \mu_i +o(\Delta))z)\prod_{i=1}^{p-q-1}(1-\chi_{p-q,i}(h)z).$$
Since $\tilde{\theta}^{\Delta,h}_0=g(h\Delta)= \sigma(h\Delta)^{p-q-1}/(p-q-1)!(1+o(1))$, we have established the result.
\end{proof}
In general, the autocorrelation structure depends on $h$ through the parameters $\chi_{p-q,i}(h)$. 
In a time series context, it is reasonable to require that the approximating Riemann sum has the same asymptotic autocorrelation structure as the $\CARMA$ process that we want to approximate.

\begin{cor}\label{cor:sum}
Let the assumptions of Proposition \ref{thm:Riemann:sum:h} hold. Then $\Phi_\Delta(B)Y^{\Delta}$ and $\Phi_\Delta(B)\tilde{Y}^{\Delta,h}$ have the same {asymptotic autocovariance structure} as $\Delta\downarrow0$   
\begin{align*}
\text{{for every} }h&\in(0,1),&\text{{if} }p-q=1,\\
\text{{for} }h&=(3{\pm}\sqrt{3})/6,&\text{{if} }p-q=2,\\
\text{{and for} }h&={\Big(15\pm\sqrt{225 - 30\sqrt{30}}\Big)/30},&\text{{if} }p-q=3.
\end{align*}
Moreover, the $\MA$ polynomials in Eqs. \eqref{MA:factors} and \eqref{MA:riemann:factors} coincide if and only if the $\CARMA$ process is invertible and $|\chi_{p-q,i}(h)|\l 1$, that is
\begin{align*}
\text{{for every} }h&\in(0,1),&\text{{if} }p-q=1,\\
\text{{for} }h&=(3+\sqrt{3})/6,&\text{{if} }p-q=2.
\end{align*}
For $p-q=3$, such an $h$ does not exist.
\end{cor}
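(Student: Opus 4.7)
The plan is to identify the autocovariance function of each filtered MA$(p-1)$ process via its Herglotz spectral representation and then equate the resulting ``variance spectra'' asymptotically. Since $\Phi_\Delta(B)Y^\Delta_n=\Theta_\Delta(B)Z_n^\Delta$ with $\var(Z^\Delta)=\sigma_\Delta^2$, and $\Phi_\Delta(B)\tilde Y^{\Delta,h}_n=\sigma\tilde\Theta_{\Delta,h}(B)\Delta L_n$ with $\var(\Delta L)=\Delta$, asymptotic equality of the autocovariance structures reduces to the polynomial identity
\[
\sigma_\Delta^2\,\Theta_\Delta(z)\Theta_\Delta(z^{-1}) \;\sim\; \sigma^2\Delta\,\tilde\Theta_{\Delta,h}(z)\tilde\Theta_{\Delta,h}(z^{-1}) \qquad \text{as }\Delta\downarrow 0,\; z=e^{-i\omega}.
\]
Substituting the factorisations from Eqs.~\eqref{MA:factors}, \eqref{prediction:error} on the left and Proposition~\ref{thm:Riemann:sum:h} on the right, the $q$ ``genuine'' factors $|1-\zeta_k e^{-i\omega}|^2$ and $|1-(1-\Delta\mu_k+o(\Delta))e^{-i\omega}|^2$ both tend to $2(1-\cos\omega)$ (the sign of $\Re(\mu_k)$ is irrelevant at leading order), and both variance prefactors are of the common order $\Delta^{2(p-q)-1}$. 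The problem thus reduces to the identity of palindromic polynomials of degree $p-q-1$ in $\cos\omega$:
\[
\frac{1}{(2(p-q)-1)!\,\prod_i \eta(\xi_i)} \prod_{i=1}^{p-q-1}|1+\eta(\xi_i)e^{-i\omega}|^2
= \frac{h^{2(p-q-1)}}{\bigl((p-q-1)!\bigr)^2} \prod_{i=1}^{p-q-1}|1-\chi_{p-q,i}(h)e^{-i\omega}|^2.
\]

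For $p-q=1$ both products are empty and both sides equal $1$, giving the claim for every $h\in(0,1)$. For $p-q=2$ I would first compute the unique positive root of $\alpha_1(x)=(x-3)/(6x^2)$, namely $\xi_1=3$, so $\eta(\xi_1)=2-\sqrt{3}$ and $1+\eta(\xi_1)^2=4\eta(\xi_1)$; matching the coefficient of $\cos\omega$ in the identity above forces $\chi_{2,1}(h)=-1/(6h^2)$, which combined with $\chi_{2,1}(h)=(h-1)/h$ from Proposition~\ref{thm:Riemann:sum:h} yields $6h^2-6h+1=0$, i.e.\ $h=(3\pm\sqrt{3})/6$ (the constant term then matches automatically, as a short computation shows). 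For $p-q=3$ the plan is to compute the two zeros of $\alpha_2(x)$, extract the elementary symmetric functions of $\eta(\xi_1),\eta(\xi_2)$, and match the two independent coefficients (those of $\cos\omega$ and $\cos 2\omega$) of the resulting quartic Laurent polynomial; exploiting the Vieta-type relations for $\chi_{3,1}(h)\chi_{3,2}(h)$ and $\chi_{3,1}(h)+\chi_{3,2}(h)$ that one reads off from Proposition~\ref{thm:Riemann:sum:h} reduces the problem to a polynomial equation in $h$ whose real roots are $h=(15\pm\sqrt{225-30\sqrt{30}})/30$.

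For the ``moreover'' statement, the MA polynomials themselves must coincide, not merely their squared moduli on $|z|=1$. The factor $(1-\zeta_k z)$ agrees with $(1-(1-\Delta\mu_k+o(\Delta))z)$ exactly when the sign in \eqref{geometric:roots} yields $\zeta_k=1-\Delta\mu_k+o(\Delta)$, i.e.\ when $\Re(\mu_k)>0$ for all $k$, which is invertibility of the CARMA process. The factor $(1+\eta(\xi_i)z)$ agrees with $(1-\chi_{p-q,i}(h)z)$, by uniqueness of the minimum-phase spectral factor and the bound $|\eta(\xi_i)|<1$ from Proposition~\ref{rootsgreater1}, iff $|\chi_{p-q,i}(h)|<1$. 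A direct evaluation of $|(h-1)/h|$ at $h=(3\pm\sqrt{3})/6$ selects only $h=(3+\sqrt{3})/6$ in the case $p-q=2$; for $p-q=3$ an elementary analysis of the explicit formula for $\chi_{3,j}(h)$ in Proposition~\ref{thm:Riemann:sum:h} shows that at both admissible values of $h$ at least one of the two $\chi_{3,j}(h)$ exceeds one in modulus, so that no $h$ realises the MA-polynomial equality.

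The hard part will be the polynomial bookkeeping in the $p-q=3$ case: both extracting the equation in $h$ from the three-coefficient matching (carefully combining Vieta's relations for $\chi_{3,1}(h),\chi_{3,2}(h)$ with those for $\eta(\xi_1),\eta(\xi_2)$) and verifying the ``at least one $|\chi_{3,j}(h)|\ge 1$'' claim at both admissible roots are algebraically heavy steps, though conceptually routine.
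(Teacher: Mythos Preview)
Your proposal is correct and follows essentially the same route as the paper: matching the autocovariances of the two filtered MA$(p-1)$ processes amounts exactly to the spectral identity $\sigma_\Delta^2|\Theta_\Delta(z)|^2\sim\sigma^2\Delta|\tilde\Theta_{\Delta,h}(z)|^2$ on $|z|=1$, which the paper itself notes (in the remark immediately after the corollary) as an equivalent formulation of its covariance-matching system. One minor simplification worth adopting from the paper's argument for the final claim in the $p-q=3$ case: rather than checking the two admissible values of $h$ separately, observe directly from the explicit formula in Proposition~\ref{thm:Riemann:sum:h} that $|\chi_{3,1}(h)|>1$ for \emph{every} $h\in(0,1)$, so no admissible $h$ can work.
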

\begin{proof}
%
%
{The claim for $p-q=1$ follows immediately from Proposition \ref{thm:Riemann:sum:h} and Eqs.~\eqref{MA:factors}-\eqref{prediction:error}. For $p=2$ and $q=0$, we have to solve the spectral factorization problem 
\begin{align*}
 \sigma_\Delta^2(1+\eta(\xi_1)^2) &= \sigma^2\Delta^3(1+\chi_{2,1}(h)^2) h^2 \\
 \sigma_\Delta^2\eta(\xi_1) &= -\sigma^2\Delta^3\chi_{2,1}(h)h^2 
\end{align*}
 with $\eta(\xi_1) = 2 - \sqrt{3} + o(1)$ and $\chi_{2,1}(h) = (h-1)/h + o(1)$. Equation~\eqref{prediction:error} then yields the two solutions $h = (3\pm\sqrt{3})/6$. For $p=3$ and $q=1$, analogous calculations lead to the same solutions. Finally, consider the case $p=3$ and $q=0$. We have to solve asymptotically the following system of equations  
\begin{align*}
 \sigma_\Delta^2(1+(\eta(\xi_1)+\eta(\xi_2))^2+\eta(\xi_1)^2\eta(\xi_2)^2) &= \frac{\sigma^2\Delta^5}{4}(1+(\chi_{3,1}(h)+\chi_{3,2}(h))^2 + \chi_{3,1}(h)^2\chi_{3,2}(h)^2) h^4 \\
 \sigma_\Delta^2(\eta(\xi_1)+\eta(\xi_2))(1+\eta(\xi_1)\eta(\xi_2)) &= -\frac{\sigma^2\Delta^5}{4}(\chi_{3,1}(h)+\chi_{3,2}(h))(1+\chi_{3,1}(h)\chi_{3,2}(h)) h^4 \\
 \sigma_\Delta^2\eta(\xi_1)\eta(\xi_2) &= \frac{\sigma^2\Delta^5}{4}\chi_{3,1}(h)\chi_{3,2}(h) h^4 
\end{align*}
 where $\eta(\xi_{1,2}) = \big(13\pm\sqrt{105}-\sqrt{270\pm26\sqrt{105}}\big)/2+o(1)$ 
 and $\chi_{3,1}(h)$ and $\chi_{3,2}(h)$ are as in Proposition \ref{thm:Riemann:sum:h}. Solving that system for $h$ gives the claimed values.}

To prove the second part of the corollary, we start observing that, under the assumption of an invertible $\CARMA$ process, the coefficients depending on $\mu_i$, if any, coincide automatically. Then it remains to check whether the coefficients depending on $h$ can be smaller than 1 in absolute value.
The cases $p-q=1,2$ follow immediately.
Moreover, to see that there is no such $h$ for $p-q=3$, it is enough to notice that, for any $h\in(0,1)$, we have $|\chi_{3,1}(h)|> 1$ and $0<|\chi_{3,2}(h)|< 1$. Hence, they never satisfy the sought requirement for $h\in (0,1)$.
\end{proof}
\begin{oss}
 It is also feasible to use spectral densities rather than covariances in the proof of Corollary \ref{cor:sum}. In that case, one has to compare the spectral densities of $\Phi_\Delta(B)Y^{\Delta}$ and $\Phi_\Delta(B)\tilde{Y}^{\Delta,h}$ asymptotically as $\Delta\to 0$. This would lead to the question whether the equation
\begin{equation} \label{eq:spectral densities}
 \sigma_\Delta^2\left|\Theta_{\Delta}(z)\right|^2 = \sigma^2\,\Delta\left|\tilde{\Theta}_{\Delta,h}(z)\right|^2
\end{equation}
holds for any $z\in\mathbb{C}$ with $|z|=1$ as $\Delta\to 0$. Of course, \eqref{eq:spectral densities} implies the same values for $h$ as those stated in Corollary \ref{cor:sum}.
\end{oss}

Corollary \ref{cor:sum} can be interpreted as a criterion to choose an $h$ such that the Riemann sum approximates the continuous-time process $Y$ in a stronger sense than the simple convergence as a random variable for every fixed time point $t$. 
The second part of the corollary says that there is an even more restrictive way to choose $h$ if we want Eqs.~\eqref{MA:factors} and \eqref{MA:riemann:factors} to coincide. If the two processes satisfy asymptotically the same causal and invertible ARMA equation, they have the same coefficients in their Wold representations as $\Delta\downarrow0$. In the case of the approximating Riemann sum these coefficients are given explicitly by definition in Eq.~\eqref{Riemann:approx}.

In the light of Eq. \eqref{convergence:kernel} and Theorem \ref{noise:extraction}, the sampled $\CARMA$ process behaves asymptotically like its 
approximating Riemann sum process for some specific $h=\bar{h}$, which might not even exist as in the case $p=3$, $q=0$. However, if such an $\bar{h}$ exists, the kernel estimators \eqref{convergence:kernel} can be improved to 
$$\frac{\sigma_\Delta}{\sqrt{\Delta}}\psi^\Delta_{\lfloor t/\Delta \rfloor}=  g(\Delta(\lfloor t/\Delta \rfloor+\bar{h}))+o(1),\quad t\in\bbr.$$ 

For invertible $\CARMA(p,q)$ processes with $p-q=1$, any choice of $h$ would accomplish that. In principle an $\bar{h}$ can be found by matching a higher-order expansion in $\Delta$, where higher-order terms depend on $h$.

For $p-q=2$, there is only a specific value $h=\bar{h}:=(3+\sqrt{3})/6$ such that $\tilde{Y}^{\Delta,\bar{h}}$ behaves as $Y^{\Delta}$ in this particular sense. Therefore, it advocates for a unique, optimal value for, e.g., simulation purposes.

Finally, for $p-q=3$, a similar value does not exist, meaning that it is not possible to mimic $Y^\Delta$ in this sense with any approximating Riemann sum. 

To confirm these observations, we now give a small numerical study.
We consider three different causal and invertible processes, a $\CARMA(2,1)$, a $\CAR(2)$, and a $\CAR(3)$ model with parameters $\lambda_1=-0.7$, $\lambda_2=-1.2$, $\lambda_3=-2.6$ and $\mu_1=3$. Of course, for the $\CARMA(2,1)$ we use only $\lambda_1,\lambda_2$ and $\mu_1$, whereas for the $\CAR$ processes there is no need for $\mu_1$. We estimate the kernel functions from the theoretical autocorrelation functions using \eqref{convergence:kernel} as in \cite{bfk:2011:2}. Our sampling rates are moderately high, namely $2^2=4$ (Figure \ref{kern22}) and $2^6=64$ samplings per unit of time (Figure \ref{kern26}). To see where the kernel is being estimated, we plot the kernel estimations on different grids. The small circles denote the extremal cases $h=0$ and $h=1$, the vertical sign the mid-point rule $h=0.5$, and the diamond and the square are the values given in Corollary \ref{cor:sum}, if any. The true kernel function is then plotted with a solid, continuous line. For the sake of clarity, only the first eight estimates are plotted. 

For the $\CARMA(2,1)$ process, the kernel estimation seems to follow a mid-point rule (i.e. $h=1/2$). For the $\CAR(2)$ process, the predicted value $\bar{h}=(3+\sqrt{3})/6$ (denoted with squares) is definitely the correct one, and for the $\CAR(3)$ the estimation is close for every $h\in [0,1]$, but constantly biased. In the limit $\Delta\downarrow 0$, the slightly weaker results given by Eq.~\eqref{convergence:kernel} still hold, showing that the bias vanishes in the limit. 
The conclusion expressed above is true for both considered sampling rates, which is remarkable since they are only moderately high in comparison with the chosen parameters.

\begin{figure}[h!]
\includegraphics[width=\columnwidth]{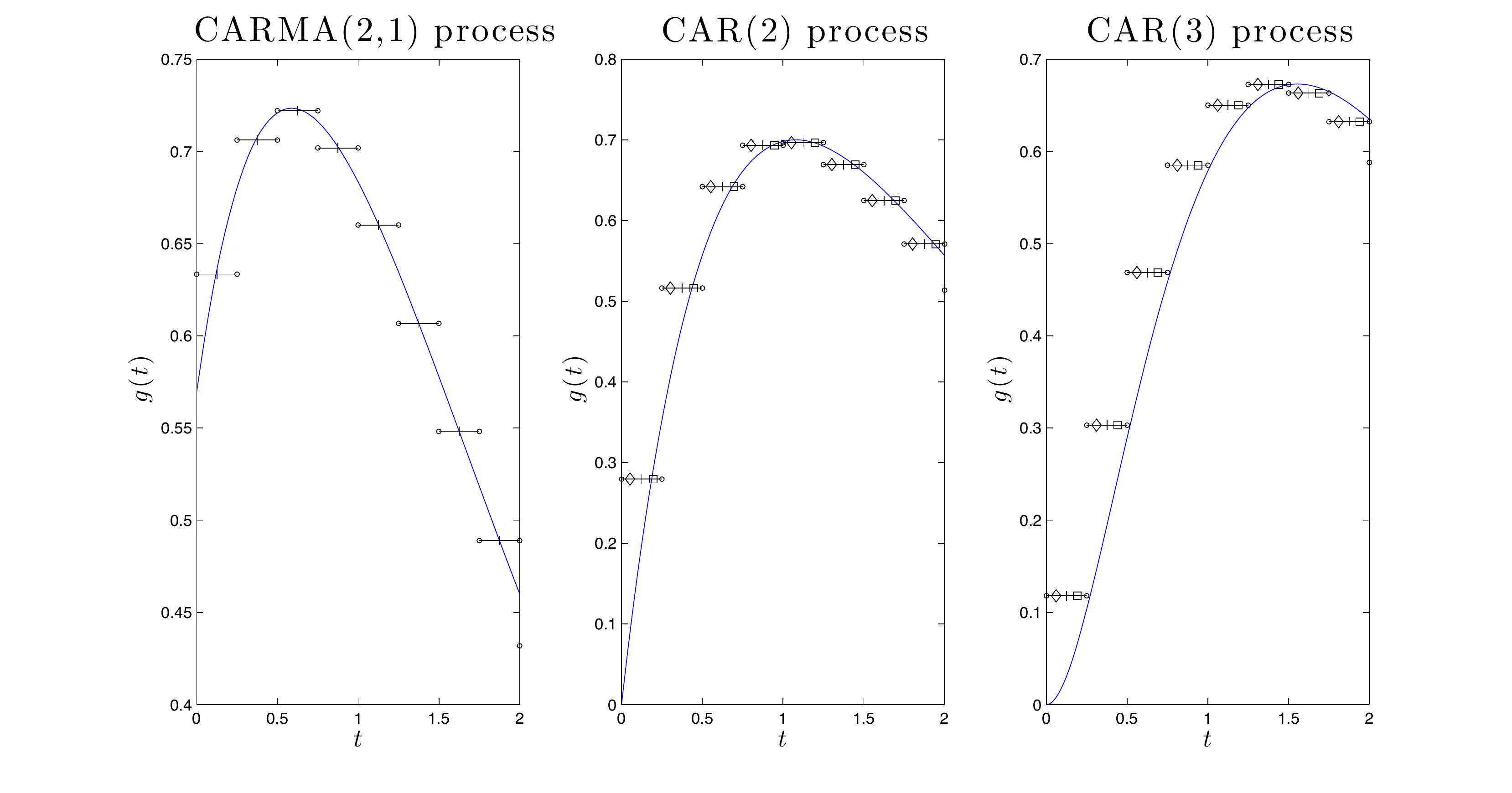}
\caption{Kernel estimation for a sampling frequency of $2^2$ samplings per unit of time, i.e. $\Delta=0.25$. The diamond and the square symbols denote, if available, the values of $h$ suggested by Corollary \ref{cor:sum}.}
\label{kern22}
\end{figure}
\begin{figure}[h!]
\includegraphics[width=\columnwidth]{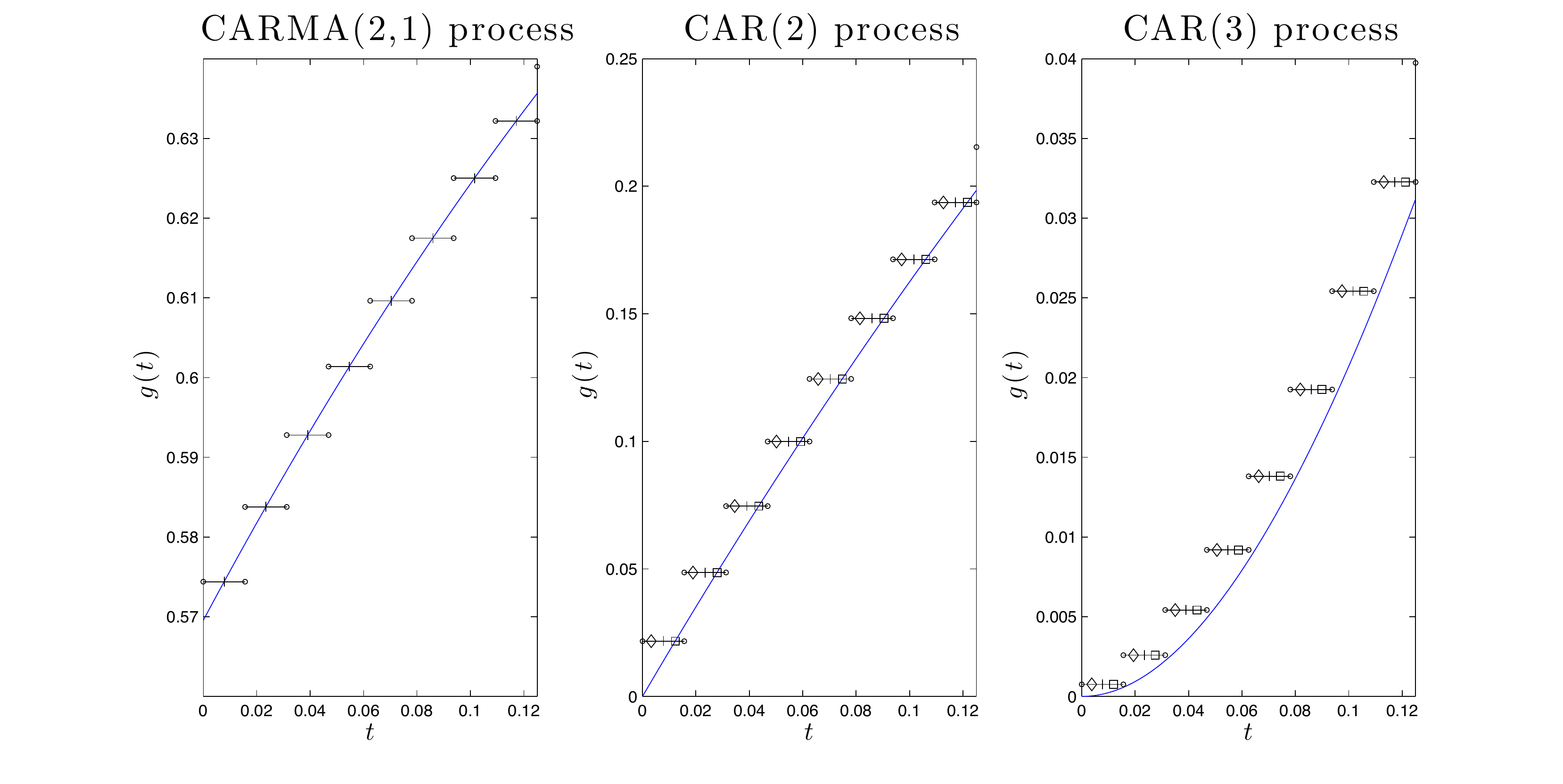}
\caption{Kernel estimation for a sampling frequency of $2^6$ samplings per unit of time, i.e. $\Delta\approx 0.016$. The diamond and the square symbols denote, if available, the values of $h$ suggested by Corollary \ref{cor:sum}.}
\label{kern26}
\end{figure}

\begin{appendix}
 
\section{Proof of Theorem~\ref{noise:extraction} and auxiliary results}
Throughout the appendix, we use the same notation as in the preceding sections. We start with the proof of our main theorem in Section~\ref{sec:noise recovery}.
\begin{proof}[\textit{Proof of Theorem~\ref{noise:extraction}.}]
Due to Assumption \ref{assumption:causal}(ii) and Proposition \ref{rootsgreater1}, the sampled $\ARMA$ equation \eqref{sampled:carma} is invertible.
The noise on the RHS of Eq.~\eqref{sampled:carma} is then obtained using the classical inversion formula 
$$Z^\Delta_n=\frac{\Phi_\Delta(B)}{\Theta_\Delta(B)}Y^\Delta_{n},\quad n\in\bbz,$$
where $B$ is the usual backshift operator.
Let us consider the stationary \emph{continuous-time} process 
\begin{equation}\mathcal{Z}^\Delta_{t}:=\frac{\Phi_\Delta(B_\Delta)}{\Theta_\Delta(B_\Delta)}Y_t=\sum_{i=0}^\infty a_i^\Delta\int_{-\infty}^{t-i\Delta}g(t-i\Delta-s)dL_s,\quad t\in\bbr,\label{interpolated:noise}\end{equation}
where the continuous-time backshift operator $B_\Delta$ is defined such that $B_\Delta Y_t:=Y_{t-\Delta}$ for every $t\in\bbr$. The coefficients $a_i^\Delta$ on the RHS of Eq.~\eqref{interpolated:noise} are determined by the Laurent series expansion of the rational function $\Phi_\Delta(\cdot)\Theta^{-1}_\Delta(\cdot)$. Moreover, $\mathcal{Z}^\Delta_{n\Delta}=Z^\Delta_{n}$ for every $n\in\bbn$; as a consequence, the random variables $\mathcal{Z}^\Delta_s,\mathcal{Z}^\Delta_t$ are uncorrelated for $|t-s|\geq \Delta$ and $\var(\mathcal{Z}^\Delta_t)=\var(Z^\Delta_n)$.
Exchanging the sum and the integral signs in Eq. \eqref{interpolated:noise}, and since $g(\cdot)=0$ for negative arguments, we have that $\mathcal{Z}^\Delta$ is a continuous-time moving average process 
$$\mathcal{Z}^\Delta_{t}=\int_{-\infty}^{t} {g}^\Delta(t-s)dL_s,\quad t\in\bbr,$$
whose kernel function $g^\Delta$ has Fourier transform (cf. Eq.~\eqref{fourier})
$$\mathcal{F}\{{g}^\Delta(\cdot)\}(\omega)=\frac{\Phi_{\Delta}(e^{i\omega\Delta})}{\Theta_{\Delta}(e^{i\omega\Delta})}\mathcal{F}\{g(\cdot)\}(\omega)=\sigma\frac{\Phi_{\Delta}(e^{i\omega\Delta})}{\Theta_{\Delta}(e^{i\omega\Delta})}\frac{b(-i\omega)}{a(-i \omega)},\quad \omega\in\bbr,\quad \Delta\g 0.$$

Since we can write $L_t-L_{t-\Delta}=\int^{t}_{-\infty}\bone_{(0,\Delta)}(t-s)dL_s$, the sum of the differences between the rescaled sampled noise terms and the increments of the L\'evy process is given by
\begin{align}
\sum_{j=1}^{n} \bar{L}^\Delta_j-L_{n\Delta} &
=\int^{n\Delta }_{-\infty}\sum_{j=1}^{n}\left[\frac{\sqrt{\Delta}}{\sigma_\Delta}g^\Delta(j\Delta-s)-\bone_{(0,\Delta)}(j\Delta-s)\right]dL_s 
 = \int_{-\infty}^{n\Delta} h^\Delta_n(n\Delta-s) dL_s,\label{diff:CMA}
\end{align}
where, for every $n\in\bbn$, $$h^\Delta_n(s):=\sum_{j=1}^{n}\left[\frac{\sqrt{\Delta}}{\sigma_\Delta}g^\Delta(s+(j-n)\Delta)-\bone_{(0,\Delta)}(s+(j-n)\Delta)\right],\quad s\in\bbr.$$ 
Note that the stochastic integral in Eq.~\eqref{diff:CMA} w.r.t. $L$ is still in the $L^2$-sense.
It is a standard result, cf. \cite[Ch. IV, \textsection 4]{Gikhmanetal2004}, that the variance of the moving average process in Eq.~\eqref{diff:CMA} is given by 
\begin{equation*} 
 \bbe\left[\sum_{j=1}^{n} \bar{L}^\Delta_j - L_{n\Delta}\right]^2 = \int_{-\infty}^{n\Delta}\left(h^\Delta_n(n\Delta-s)\right)^2 ds = \| h^\Delta_n(\cdot)\|_{L^2}^2,
\end{equation*}
where the latter equality is true since $h^\Delta_n(s) = 0$ for any $s\leq 0$. 

Furthermore, the Fourier transform of $h^\Delta_n(\cdot)$ can be readily calculated, invoking the linearity and the shift property of the Fourier transform. We thus obtain 
\begin{align}
 \mathcal{F}\{h^\Delta_n(\cdot)\}(\omega) &=\left[\frac{\sqrt{\Delta}}{\sigma_\Delta}\mathcal{F}\{g^\Delta(\cdot)\}(\omega)-\mathcal{F}\{\bone_{(0,\Delta)}(\cdot)\}(\omega)\right]\sum_{j=1}^{n}e^{i\omega(n-j)\Delta} \nonumber \\
  & = \left[\sigma\frac{\sqrt{\Delta}}{\sigma_\Delta}\dfrac{\prod_{j=1}^p(1-e^{\Delta(\lambda_j + i\omega)})}{\Theta_{\Delta}(e^{i\omega\Delta})}\frac{b(-i\omega)}{a(-i\omega)}-\frac{e^{i \omega \Delta}-1}{i\omega }\right]\frac{1-e^{i   \omega \Delta n}}{1-e^{i \omega\Delta }} \nonumber \\
  & =: \left[h^{\Delta, 1}(\omega) - h^{\Delta, 2}(\omega)\right]\cdot h_n^{\Delta, 3}(\omega),\quad \omega\in\bbr. \nonumber 
\end{align}
Due to Plancherel's Theorem, we deduce
\begin{align}
 \var&\left[\sum_{i=1}^{n} \bar{L}^\Delta_j - L_{n\Delta}\right] = \| h^\Delta_n(\cdot)\|_{L^2}^2 = \frac{1}{2\pi}\int_{\bbr}|\mathcal{F}\{h^\Delta_n(\cdot)\}|^2(\omega)d\omega, \nonumber \\
  & = \frac{1}{2\pi}\int_\bbr\left[\left|h^{\Delta, 1}\cdot h_n^{\Delta, 3}(\omega)\right|^2 + \left|h^{\Delta, 2}\cdot h_n^{\Delta, 3}(\omega)\right|^2 - 2\Re\left(h^{\Delta, 1}\cdot\overline{h^{\Delta, 2}}(\omega)\right)\left|h_n^{\Delta, 3}(\omega)\right|^2\right] d\omega. \label{var:spectral} 
\end{align}
It is easy to see that the first two integrals in Eq.~\eqref{var:spectral} are, respectively, the {variances} of $\sum_{i=1}^{n} \bar{L}^\Delta_j$ and $L_{n\Delta}$, both equal to $n\Delta$. 
Setting $n := \lfloor t/\Delta \rfloor$ yields for fixed positive $t$, as $\Delta\downarrow 0$,  
\begin{align} 
 \var\left[\sum_{i=1}^{\lfloor t/\Delta \rfloor} \bar{L}^\Delta_j - L_{\lfloor t/\Delta \rfloor\Delta}\right] &= 2\lfloor t/\Delta \rfloor\Delta - \frac{1}{\pi}\int_\bbr\Re\left(h^{\Delta, 1}\cdot\overline{h^{\Delta, 2}}(\omega)\right)\left|h_{\lfloor t/\Delta \rfloor}^{\Delta, 3}(\omega)\right|^2 d\omega \nonumber \\
  &= 2t(1+o(1)) - \frac{1}{\pi}\int_\bbr\Re\left(h^{\Delta, 1}\cdot\overline{h^{\Delta, 2}}(\omega)\right)\left|h_{\lfloor t/\Delta \rfloor}^{\Delta, 3}(\omega)\right|^2 d\omega. \nonumber
\end{align}
Hence, to show Eq.~\eqref{claim2}, it remains to prove that
\begin{equation*}
 \frac{1}{\pi}\int_\bbr\Re\left(h^{\Delta, 1}\cdot\overline{h^{\Delta, 2}}(\omega)\right)\left|h_{\lfloor t/\Delta \rfloor}^{\Delta, 3}(\omega)\right|^2 d\omega = 2t(1+o(1))\quad\text{ as }\Delta\downarrow 0, 
\end{equation*}
which in turn is equivalent to 
\begin{align} 
 \frac{1}{2\pi t}\int_\bbr\sigma&\frac{\sqrt{\Delta}}{\sigma_\Delta}\frac{1 - \cos(\omega\lfloor t/\Delta\rfloor \Delta)}{1 - \cos(\omega\Delta)}
  \Bigg[\frac{\sin(\omega\Delta)}{\omega}\Re\left(\dfrac{\prod_{j=1}^p(1-e^{\Delta(\lambda_j + i\omega)})}{\Theta_{\Delta}(e^{i\omega\Delta})}\frac{b(-i\omega)}{a(-i\omega)}\right) \nonumber \\
  & + \frac{1 - \cos(\omega\Delta)}{\omega}\Im\left(\dfrac{\prod_{j=1}^p(1-e^{\Delta(\lambda_j + i\omega)})}{\Theta_{\Delta}(e^{i\omega\Delta})}\frac{b(-i\omega)}{a(-i\omega)}\right)\Bigg] d\omega = 1+o(1)\quad\text{ as }\Delta\downarrow 0. \label{remaining statement}
\end{align}

Now, Lemma \ref{convergence:integrand} asserts that the integrand in Eq.~\eqref{remaining statement} converges pointwise, for every $\omega\neq 0$, to $2(1-\cos(\omega t))/\omega^2$ as $\Delta\downarrow 0$.
Since, for  sufficiently small $\Delta$, the integrand is dominated by an integrable function (see Lemma \ref{dominant:integrand}), we can apply Lebesgue's Dominated Convergence Theorem and deduce that the LHS of Eq.~\eqref{remaining statement}
converges, as $\Delta\downarrow 0$, to
$$ \frac{1}{\pi t}\int_\bbr \frac{1-\cos(\omega t)}{\omega^2} d\omega = \frac{2}{\pi}\int_0^\infty \frac{1 - \cos(\omega)}{\omega^2} d\omega = 1.$$
This proves \eqref{remaining statement} and concludes the proof of the {``if''-statement}.

{As to the ``only if''-part, let $J:=\{j=1,\ldots,q:\ \Re(\mu_j)<0\}$ and suppose that $|J|\geq 1$. 
Due to Eq.~\eqref{MA:factors} we have for $\Delta\downarrow0$
\begin{align}\frac{b(-i\omega)}{\Theta_\Delta(e^{i\omega\Delta})}&=\prod_{j=1}^{p-q-1}\left(1+\eta(\xi_j)\right)^{-1}\prod_{j=1}^q\frac{\mu_j-i\omega}{1-\zeta_j\,e^{i\omega\Delta}}\notag\\
&=\prod_{j=1}^{p-q-1}\left(1+\eta(\xi_j)\right)^{-1}\Delta^{-q}\prod_{j\in J}\frac{\mu_j-i\omega}{-\mu_j-i\omega}(1+o(1))\notag\\
&=\prod_{j=1}^{p-q-1}\left(1+\eta(\xi_j)\right)^{-1}\Delta^{-q}(1+D(\omega))(1+o(1)),\qquad\omega\in\bbr,\label{broke}\end{align}
where $D(\omega):=-1 + \prod_{j\in J}(\mu_j-i\omega)/(-\mu_j-i\omega)$. 
{ By virtue of Lemmata \ref{convergence:integrand} and \ref{dominant:integrand}}, we then obtain that the LHS of Eq.~\eqref{remaining statement} converges, as $\Delta\downarrow 0$, to 
$$ \frac{1}{\pi t}\int_\bbr \frac{1-\cos(\omega t)}{\omega^2}\big(1+\Re(D(\omega))\big)\, d\omega = 1 + \frac{1}{\pi}\int_\bbr\frac{1-\cos(\omega)}{\omega^2}\Re(D(\omega/t))\, d\omega.$$
Since $|\prod_{j\in J}(\mu_j-i\omega)/(-\mu_j-i\omega)|=1$, we further deduce that $\Re(D(\omega))\leq 0$ for any $\omega\in\bbr$. Obviously, $\Re(D(\omega))\not\equiv 0$ and hence, 
$$ \frac{1}{\pi t}\int_\bbr \frac{1-\cos(\omega t)}{\omega^2}\big(1+\Re(D(\omega))\big)\, d\omega < 1.$$
This shows that the convergence result \eqref{claim2} cannot hold. 

}
\end{proof}
In the following, we state three auxiliary results. For the proof of the first one, we need a concrete representation of the function $\alpha_n(x)$, which is defined in Eq.~\eqref{expansion:geometric:roots}. It can be shown 
that 
\begin{equation*}
 \alpha_{n}(x)=\frac{P_n(x)}{(2n+1)!\,x^{n+1}}, \quad x\neq0,\ n\in\bbn,
\end{equation*}
where $P_{n}(x)$ is a polynomial of order $n$ in $x$, namely
\begin{align}
P_n(x)=&\sum_{j=0}^{n} x^{n-j}\sum_{k=j+1}^{n} {(2k)!\stirlingtwo{2n+1}{2k}}\sum_{i=j}^{k}\left[ {i+1\choose j+1}\binom{2k}{2 i+1}-{i\choose j+1}\binom{2k}{2 i}\right](-2)^{j+1-2k}\nonumber\\
&+\sum_{j=0}^{n} x^{n-j}\sum_{k=j}^{n}{(2k+1)!\stirlingtwo{2n+1}{2k+1}}\sum_{i=j}^{k} \left[ {i+1\choose j+1}\binom{2k+1}{2 i+1}-{i\choose j+1}\binom{2k+1}{2 i}\right](-2)^{j-2k}\label{Poly},
\end{align}
with $\stirlingtwo{\cdot}{\cdot}$ being the Stirling number of the second kind. 
\begin{prop}\label{prop:real:roots}
All the zeroes of $\alpha_n(x)$ 
are real, distinct and greater than 2.
\end{prop}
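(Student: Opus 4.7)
The plan is to identify $P_n(x) := (2n+1)!\,x^{n+1}\alpha_n(x)$ with a rescaling of the $(2n+1)$-th Eulerian polynomial evaluated at a natural function of $x$, and then to invoke Frobenius's classical real-rootedness theorem. The key change of variables is a monotone bijection of $(2,\infty)$ onto $(-1,0)$, transporting the problem into the standard Eulerian setting.

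First I would substitute $z = iv$ in the defining generating function \eqref{expansion:geometric:roots}; since $\sinh(iv)=i\sin v$ and $\cosh(iv)=\cos v$, this yields the trigonometric identity
\[
\sum_{n=0}^{\infty}(-1)^n\alpha_n(x)\,v^{2n+1} = \frac{\sin v}{\cos v + x - 1}.
\]
For $x>2$, I introduce $q=q(x):=\sqrt{x(x-2)}-(x-1)\in(-1,0)$, the smaller root of $q^2+2(x-1)q+1=0$, so that $\cos v+x-1 = -(1-2q\cos v+q^2)/(2q)$. Applying the Poisson-kernel identity $(1-q^2)/(1-2q\cos v+q^2) = 1+2\sum_{k\geq 1}q^k\cos(kv)$ together with the product-to-sum formula for $\sin v\cos(kv)$, the right-hand side telescopes to the single geometric series
\[
\frac{\sin v}{\cos v + x - 1} = -2\sum_{m=1}^{\infty} q^m\sin(mv).
\]

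Extracting the coefficient of $v^{2n+1}$ via the Taylor series of $\sin(mv)$ and the classical generating-function identity $\sum_{m\geq 1}m^k q^m = qA_k(q)/(1-q)^{k+1}$ (with $A_k$ the $k$-th Eulerian polynomial), together with the elementary consequence $(1-q)^2=-2qx$ of the quadratic, produces the key representation
\[
P_n(x) = \frac{A_{2n+1}(q(x))}{(-2q(x))^n}, \qquad x > 2.
\]
The map $x\mapsto q(x)$ is strictly increasing from $(2,\infty)$ onto $(-1,0)$, and for any fixed $x>2$ the two roots of $t^2+2(x-1)t+1=0$ are reciprocal, namely $q(x)\in(-1,0)$ and $1/q(x)\in(-\infty,-1)$. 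Hence each reciprocal pair of roots of the Eulerian polynomial produces exactly one zero of $P_n$ in $(2,\infty)$.

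By Frobenius's theorem, $A_{2n+1}$ has $2n$ simple, strictly negative real roots; its palindromic identity $A_{2n+1}(q) = q^{2n}A_{2n+1}(q^{-1})$ pairs these roots as $\{-\rho_i,-\rho_i^{-1}\}$, and the even root count combined with simplicity forces $\rho_i\ne 1$ (while $A_{2n+1}(1)=(2n+1)!\ne 0$ excludes $+1$), so $\rho_i\in(0,1)$ for all $i$. Taking preimages under $q$ produces $n$ distinct zeros of the degree-$n$ polynomial $P_n$, all in $(2,\infty)$, and by degree these exhaust its zero set. The principal technical obstacle is the Fourier-series collapse to the single geometric form, combined with the algebraic verification that $A_{2n+1}(q)/(-2q)^n$ is genuinely polynomial in $x$ of degree $n$ upon the reduction $q^2 = -2(x-1)q - 1$; once this identification is in hand, the conclusion is immediate from well-known facts about Eulerian polynomials.
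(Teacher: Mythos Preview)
Your argument is correct and takes a genuinely different route from the paper's. The paper proceeds by observing that $f(z,x)=\sinh(z)/(\cosh(z)-1+x)$ satisfies the mixed PDE $\partial_z^2 f = [(x-1)\partial_x + x(x-2)\partial_x^2]f$, which yields the Sturm--Liouville-type recursion $(2n+3)(2n+1)\alpha_{n+1}(x)=\sqrt{x(x-2)}\,\partial_x\big(\sqrt{x(x-2)}\,\partial_x\alpha_n(x)\big)$; an induction based on Rolle's theorem then produces interlacing zeros in $(2,\infty)$ at each step. Your approach instead identifies $P_n(x)=A_{2n+1}(q(x))/(-2q(x))^n$ via the substitution $q(x)=\sqrt{x(x-2)}-(x-1)$ and the Fourier/Poisson telescoping, and then imports Frobenius's real-rootedness theorem for Eulerian polynomials together with their palindromic symmetry. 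The paper's argument is entirely self-contained and elementary but somewhat delicate (one must track boundary behaviour at $x=2$ and at infinity to squeeze out the extra root at every induction step); your argument is shorter and more structural, at the cost of relying on a nontrivial classical result. Two minor remarks: the exclusion of $q=+1$ is redundant since Frobenius already gives strictly negative roots; and the polynomiality and degree of $A_{2n+1}(q)/(-2q)^n$ in $x$ follow cleanly from the palindromic identity (it is symmetric under $q\leftrightarrow 1/q$, hence a polynomial of degree $n$ in $q+1/q=-2(x-1)$), which you might state explicitly rather than leave as a ``technical obstacle''.
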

\begin{proof} 
Using Eq.~\eqref{Poly}, we easily see that, for $P_n(x)=p_0+p_1x+\ldots+p_n x^n$,
\begin{equation}p_0=(-2)^{-n}(2n+1)!,\quad p_n=1\label{coeff:poly},\end{equation}
i.e. $P_n(x)$ will have $n$, potentially complex, roots, and they cannot be zero. 
Moreover, it is easy to verify that 
$$f(z,x) := {\frac{\sinh(z)}{\cosh(z)-1+x}} = \frac{e^{2z}-1}{e^{2z}+1+2(x-1)e^{z}},\quad z\in{\bbc},\ x\neq 0,$$
solves the mixed partial differential equation
\begin{equation}
\frac{\partial^2}{\partial z^2}f(z,x)=\left[(x-1) \frac{\partial}{\partial x}+x(x-2)\frac{\partial^2}{\partial x^2}\right]f(z,x). 
\label{PDE}
\end{equation}
We take $2n-1$ 
derivatives in $z$ on both sides of Eq.~\eqref{PDE}. Invoking the Schwarz Theorem, the product rule for derivatives and evaluating the resulting expression for $z=0$, we obtain that the function $\alpha_{n}(x)$ is given by recursion, for $x\not\in(0,2)$, as
\begin{align}
{(2n+3)\,(2n+1)}\,\alpha_{n+1}(x)&=\sqrt{x(x-2)}\frac{\partial}{\partial x}\left[\sqrt{x(x-2)}\frac{\partial}{\partial x}\alpha_n(x)\right],\label{recursion}\\
\alpha_{0}(x)&=1/x.\notag
\end{align}
We prove by induction 
that the roots 
are real, distinct and greater than 2. The functions ${\alpha_0(x)}=1/x$ and ${6\alpha_1(x)=(x-3)/x^2}$ have, respectively, no and one zero, so the claim can be partially verified. We start with $\alpha_2(x)=(30-15x+x^2)/{(120x^3)}$, whose zeroes are $\xi_{2,1}=1/2\left(15-\sqrt{105}\right)\approx 2.37652$ and $\xi_{2,2}=1/2\left(15+\sqrt{105}\right)\approx12.6235$, and note that they satisfy the claim. Assume that the statement is valid for $\alpha_n(x)$, $n\geq 2$, and its zeroes are $2<\xi_{n,1}<\xi_{n,2}<\ldots< \xi_{n,n}$. 

The derivative of {$\alpha_n(x)$} is of the form $Q_n(x)/x^{n+2}$, where ${(2n+1)!\,Q_n(x)}=x\frac{\partial}{\partial x}P_n(x)-(1+n)P_n(x)$. By virtue of {Rolle's Theorem}, $Q_n(x)$ has $n-1$ real roots $\chi_{n,i}$, $i=1,\ldots, n-1$, such that $2<\xi_{n,1}<\chi_{n,1}<\xi_{n,2}<\chi_{n,2}<\ldots< \chi_{n,n-1}< \xi_{n,n}$. Using the product rule and the value of the coefficients in Eq.~\eqref{coeff:poly}, we get
\begin{equation}
\frac{\partial}{\partial x} \alpha_n(x)\sim
-x^{-2}/(2n+1)!\rightarrow 0,\quad x\rightarrow \infty.\end{equation}
Again due to {Rolle's Theorem}, and since $\frac{\partial}{\partial x} \alpha_n(x)\rightarrow 0$ {and $\alpha_n(x)\to 0$} as $x\rightarrow \infty$, the function $Q_n(x)$ has a zero at some point $\xi_{n,n}<\chi_{n,n}<\infty$. For $x\geq 2$, the function {$\sqrt{x(x-2)}\frac{\partial}{\partial x}\alpha_n(x)$} is well defined {and it is zero for $x=2$ and $x=\chi_{n,i}, i=1,\ldots,n$}. 
{With the same arguments as before, we then obtain that $\frac{\partial}{\partial x}[\sqrt{x(x-2)}\frac{\partial}{\partial x}\alpha_n(x)]$ } is zero for $x=\xi_{n+1,i}$, $i=1,\ldots, n+1$, {where} $2<\xi_{n+1,1}<\chi_{n,1}<\xi_{n+1,2}<\chi_{n,2}<\ldots< \chi_{n,n}< \xi_{n+1,n+1}<\infty$. Due to Eq.~\eqref{recursion}, those zeroes are {also roots} of, respectively, $\alpha_{n+1}(x)$ and $P_{n+1}(x)$. Since $P_{n+1}(x)$ is a polynomial of order $n+1$, it  can have only $n+1$ roots, which were found already. Moreover, they are all real, distinct and strictly greater than 2, and the claim is proven.
\end{proof}

\begin{lem} \label{convergence:integrand}
 Suppose that {$\Re(\mu_j)\neq 0$} for all $j=1,\ldots,q$. We have, for any $t\in(0,\infty)$ and $\omega\neq 0$, 
 \begin{align*}
  \lim\limits_{\Delta\downarrow 0} \sigma\frac{\sqrt{\Delta}}{\sigma_\Delta}&\frac{1 - \cos(\omega\lfloor t/\Delta\rfloor \Delta)}{\omega}
  \frac{\sin(\omega\Delta)}{1 - \cos(\omega\Delta)}\Re\left(\dfrac{\prod_{j=1}^p(1-e^{\Delta(\lambda_j + i\omega)})}{\Theta_{\Delta}(e^{i\omega\Delta})}\frac{b(-i\omega)}{a(-i\omega)}\right) \\
   &= \frac{2 - 2\cos(\omega t)}{\omega^2}{  \big(1+\Re(D(\omega))\big)} 
 \end{align*}
 and
 $$\lim\limits_{\Delta\downarrow 0} \sigma\frac{\sqrt{\Delta}}{\sigma_\Delta}\frac{1 - \cos(\omega\lfloor t/\Delta\rfloor \Delta)}{\omega}\Im\left(\dfrac{\prod_{j=1}^p(1-e^{\Delta(\lambda_j + i\omega)})}{\Theta_{\Delta}(e^{i\omega\Delta})}\frac{b(-i\omega)}{a(-i\omega)}\right) = 0,$$
 where $D(\omega):=-1 + \prod_{j\in J}(\mu_j - i\omega)/(-\mu_j - i\omega)$ and $J:=\{j=1,\ldots,q:\ \Re(\mu_j)<0\}$. Obviously, if $\Re(\mu_j)>0$ for all $j=1,\ldots,q$, then 
 $D(\omega) = 0$ for all $\omega\in\bbr$.
\end{lem}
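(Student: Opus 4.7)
The plan is to perform a direct asymptotic expansion, as $\Delta\downarrow 0$, of the core complex-valued quantity
$$ F^\Delta(\omega):=\sigma\frac{\sqrt{\Delta}}{\sigma_\Delta}\,\dfrac{\prod_{j=1}^p(1-e^{\Delta(\lambda_j + i\omega)})}{\Theta_{\Delta}(e^{i\omega\Delta})}\frac{b(-i\omega)}{a(-i\omega)}, $$
and to show that $F^\Delta(\omega)=\Delta\bigl(1+D(\omega)\bigr)\bigl(1+o(1)\bigr)$ for every fixed $\omega\neq 0$. The two statements of the lemma then follow by multiplying by the elementary trigonometric prefactors and invoking standard Taylor expansions.

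First I would expand the numerator using $1-e^{\Delta z}=-\Delta z(1+o(1))$ for fixed $z\in\bbc$, giving $\prod_{j=1}^p(1-e^{\Delta(\lambda_j+i\omega)}) = \Delta^p\,a(-i\omega)(1+o(1))$, which cancels $a(-i\omega)$ automatically. For the denominator I would invoke the factorisation \eqref{MA:factors} with the sign convention imposed by Remark~\ref{remarkSpecFact}: the spurious factors $1+\eta(\xi_i)e^{i\omega\Delta}$ converge to $1+\eta(\xi_i)$, while each $1-\zeta_k e^{i\omega\Delta}$ behaves, by \eqref{geometric:roots}, like $\Delta(\mu_k-i\omega)(1+o(1))$ if $\Re(\mu_k)>0$, respectively $\Delta(-\mu_k-i\omega)(1+o(1))$ if $\Re(\mu_k)<0$. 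Using $b(-i\omega)=\prod_{k=1}^q(\mu_k-i\omega)$, the telescoping yields
$$ \frac{b(-i\omega)}{\Theta_\Delta(e^{i\omega\Delta})} = \frac{\Delta^{-q}\bigl(1+D(\omega)\bigr)}{\prod_{i=1}^{p-q-1}(1+\eta(\xi_i))}(1+o(1)). $$
Inserting the asymptotics \eqref{prediction:error} gives $\sigma\sqrt\Delta/\sigma_\Delta = \Delta^{1-(p-q)}\sqrt{(2(p-q)-1)!\prod_i\eta(\xi_i)}\,(1+o(1))$, so that all powers of $\Delta$ collapse to a single factor $\Delta$, leaving
$$ F^\Delta(\omega) = \Delta\cdot C\cdot\bigl(1+D(\omega)\bigr)(1+o(1)),\qquad C:=\frac{\sqrt{(2(p-q)-1)!\prod_i\eta(\xi_i)}}{\prod_i(1+\eta(\xi_i))}. $$

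The crux is then the algebraic identity $C=1$. From \eqref{geometric:roots} the leading term of $\eta(\xi_i)$ solves $\eta^2-2(\xi_i-1)\eta+1=0$, hence $(1+\eta(\xi_i))^2 = 2\xi_i\,\eta(\xi_i)$ and therefore $\prod_i(1+\eta(\xi_i))^2 = 2^{p-q-1}\prod_i\xi_i\prod_i\eta(\xi_i)$, so that $C^2 = (2(p-q)-1)!/(2^{p-q-1}\prod_i\xi_i)$. The identity $\prod_i\xi_i = (2(p-q)-1)!/2^{p-q-1}$ follows by Vieta's formulas from the explicit constant and leading coefficients $(-2)^{-(p-q-1)}(2(p-q)-1)!$ and $1$ of the polynomial $P_{p-q-1}$ displayed in \eqref{Poly} (see the proof of Proposition~\ref{prop:real:roots}), which forces $C^2=1$. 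With $F^\Delta(\omega)=\Delta(1+D(\omega))(1+o(1))$ in hand, the first limit is obtained by multiplying by $\sin(\omega\Delta)/(1-\cos(\omega\Delta)) = 2/(\omega\Delta)+O(\Delta)$ and by $(1-\cos(\omega\lfloor t/\Delta\rfloor\Delta))/\omega \to (1-\cos(\omega t))/\omega$, and taking real parts. The second limit is immediate: $\Im F^\Delta(\omega) = O(\Delta)$, and multiplication by the bounded factor $(1-\cos(\omega\lfloor t/\Delta\rfloor\Delta))/\omega$ preserves convergence to zero.

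The only genuine obstacle is the identity on $\prod_i\xi_i$, which is what allows the otherwise awkward constant $C$ to evaluate to one; happily, this is a direct consequence of the explicit coefficient formula already derived in the appendix. Everything else is bookkeeping of $\Delta$-powers and elementary trigonometric expansions.
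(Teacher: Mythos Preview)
Your proof is correct and follows essentially the same route as the paper: expand the numerator as $\Delta^p a(-i\omega)(1+o(1))$, factorise $\Theta_\Delta$ via \eqref{MA:factors}--\eqref{geometric:roots} to extract the $(1+D(\omega))$ contribution and the constant $C$, insert the asymptotics \eqref{prediction:error} for $\sigma\sqrt{\Delta}/\sigma_\Delta$, and finish with the trigonometric limits $\Delta\sin(\omega\Delta)/(1-\cos(\omega\Delta))\to 2/\omega$ and $\cos(\omega\lfloor t/\Delta\rfloor\Delta)\to\cos(\omega t)$.

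The one genuine difference lies in how the identity $C=1$ is obtained. The paper simply quotes it from \cite[proof of Theorem~3.2]{bfk:2011:2}. You instead supply a self-contained argument: from the quadratic $\eta^2-2(\xi_i-1)\eta+1=0$ satisfied by the leading term of $\eta(\xi_i)$ you get $(1+\eta(\xi_i))^2=2\xi_i\eta(\xi_i)$, and then Vieta's formula applied to the explicit coefficients $p_0=(-2)^{-n}(2n+1)!$, $p_n=1$ of $P_{p-q-1}$ in \eqref{coeff:poly} gives $\prod_i\xi_i=(2(p-q)-1)!/2^{p-q-1}$, whence $C^2=1$. Since $\eta(\xi_i)\in(0,1)$ by Proposition~\ref{rootsgreater1}, all factors in $C$ are positive and $C=1$ (you should say this explicitly; $C^2=1$ alone does not suffice). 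Your argument is more transparent and keeps the proof internal to the present paper, whereas the citation hides the mechanism; the cost is only a few extra lines.
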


\begin{proof}
{Due to Proposition \ref{rootsgreater1}, we have that $\eta(\xi_j)\in(0,1)$ 
for sufficiently small $\Delta$.} 
Hence, for any $\omega\in\bbr$,
 \begin{align}
  \dfrac{\prod_{j=1}^p(1-e^{\Delta(\lambda_j + i\omega)})}{\Theta_{\Delta}(e^{i\omega\Delta})}\frac{b(-i\omega)}{a(-i\omega)} 
   &= \dfrac{1}{\prod_{j=1}^{p-q-1} (1 + \eta(\xi_j)e^{i\omega\Delta})}\prod\limits_{j=1}^p\frac{e^{\Delta(\lambda_j + i\omega)} - 1}{i\omega + \lambda_j} \prod\limits_{j=1}^q\frac{\mu_j-i\omega}{1-\zeta_j e^{i\omega\Delta}} \nonumber \\
   &= \Delta^{p-q}{  (1+D(\omega))}\prod\limits_{j=1}^{p-q-1} (1 + \eta(\xi_j))^{-1}\cdot(1+o(1))\quad\text{ as }\Delta\downarrow 0. \nonumber 
 \end{align}
 Moreover, using Eq.~\eqref{prediction:error}, we obtain
 $$ \sigma\frac{\sqrt{\Delta}}{\sigma_\Delta} = \frac{\sqrt{[2(p-q)-1]!\cdot\prod_{j=1}^{p-q-1}\eta(\xi_j)}}{\Delta^{p-q-1}}(1+o(1))\quad\text{ as }\Delta\downarrow 0. $$
 Since $\cos(\omega\lfloor t/\Delta\rfloor\Delta)\to \cos(\omega t)$ and $\Delta\sin(\omega\Delta)/(1 - \cos(\omega\Delta))\to 2/\omega$ as $\Delta\downarrow 0$ for any $\omega\neq 0$,
 we can use the equality (cf. \cite[proof of Theorem 3.2]{bfk:2011:2}) 
 $$ \frac{\sqrt{[2(p-q)-1]!\cdot\prod_{j=1}^{p-q-1}\eta(\xi_j)}}{\prod_{j=1}^{p-q-1} (1 + \eta(\xi_j))} = \frac{\prod_{j=1}^{p-q-1} |1 + \eta(\xi_j)|}{\prod_{j=1}^{p-q-1} (1 + \eta(\xi_j))}\cdot(1+o(1)) = 1+o(1)\quad\text{ as }\Delta\downarrow 0$$
 to conclude the proof. 
\end{proof}

\begin{lem} \label{dominant:integrand}
 Suppose that {$t\in(0,\infty)$} and $\Re(\mu_j)\neq0$ for all $j=1,\ldots,q$, and let the functions $h^{\Delta, 1}(\cdot), h^{\Delta, 2}(\cdot)$ and $h_{\lfloor t/\Delta\rfloor}^{\Delta, 3}(\cdot)$ be defined as in the proof of Theorem \ref{noise:extraction}. 
 There is a constant $C>0$ such that, for any $\omega\in\bbr$ and any sufficiently small $\Delta$,
 $$ \left|2\Re\left(h^{\Delta, 1}\cdot h_{\lfloor t/\Delta\rfloor}^{\Delta, 3}(\omega)\cdot\overline{h^{\Delta, 2}\cdot h_{\lfloor t/\Delta\rfloor}^{\Delta, 3}(\omega)}\right)\right|\leq 
 h(\omega),$$
 where $h(\omega) := \big(7^{2p}/2^{2p+q} + 1\big) t^2\bone_{(-1,1)}(\omega) + {C}{\omega^{-2}}\bone_{\bbr\backslash(-1,1)}(\omega)$. Moreover, $h$ is integrable over the real line.
\end{lem}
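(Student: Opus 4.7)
The plan is to apply the elementary inequality $2|\Re(A\overline B)|\le|A|^2+|B|^2$ with $A:=h^{\Delta,1}\,h^{\Delta,3}_{\lfloor t/\Delta\rfloor}$ and $B:=h^{\Delta,2}\,h^{\Delta,3}_{\lfloor t/\Delta\rfloor}$, so as to reduce the problem to dominating $|A|^2$ and $|B|^2$ separately on the two regimes $(-1,1)$ and $\bbr\setminus(-1,1)$ that match the two pieces of the candidate $h$.

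The bound on $|B|^2$ is elementary: since $|h^{\Delta,2}(\omega)\,h^{\Delta,3}_{\lfloor t/\Delta\rfloor}(\omega)|^2=4\sin^2(\lfloor t/\Delta\rfloor\omega\Delta/2)/\omega^2$, one gets $|B|^2\le t^2$ on $(-1,1)$ (via $|\sin x|\le|x|$ and $\lfloor t/\Delta\rfloor\Delta\le t$) and $|B|^2\le 4/\omega^2$ on $\bbr\setminus(-1,1)$. It therefore remains to prove $|A|^2\le (7^{2p}/2^{2p+q})\,t^2$ on $(-1,1)$ and $|A|^2\le C/\omega^2$ on $\bbr\setminus(-1,1)$, both uniformly in $\Delta$ for sufficiently small $\Delta$.

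The low-frequency estimate of $|A|^2$ will follow by combining: the trivial bound $|h^{\Delta,3}_{\lfloor t/\Delta\rfloor}|\le\lfloor t/\Delta\rfloor\le t/\Delta$; the inequality $|1-e^z|\le|z|e^{|z|}$, which for $\Delta$ small and $|\omega|<1$ yields $|1-e^{\Delta(\lambda_j+i\omega)}|\le C_j\Delta$ with an explicit constant per factor; the lower bound $|\Theta_\Delta(e^{i\omega\Delta})|\ge\prod_j(1-|\eta(\xi_j)|)\prod_k(1-|\zeta_k|)$, where the first product is bounded away from zero by Proposition~\ref{rootsgreater1} and $1-|\zeta_k|\sim|\Re\mu_k|\Delta$ controls the second; the boundedness of $|b(-i\omega)/a(-i\omega)|$ on $(-1,1)$; and the asymptotic $\sigma\sqrt\Delta/\sigma_\Delta\sim C\Delta^{-(p-q-1)}$ from~\eqref{prediction:error}. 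The powers of $\Delta$ cancel exactly, and a careful tracking of the multiplicative constants produces the prefactor $(7^{2p}/2^{2p+q})\,t^2$.

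The hard part is the high-frequency bound $|A|^2\le C/\omega^2$ on $|\omega|\ge 1$, uniformly in $\Delta$. The source of decay is $|b(-i\omega)/a(-i\omega)|\le M|\omega|^{-(p-q)}$; the difficulty is that both $|\Theta_\Delta(e^{i\omega\Delta})|^{-1}$ and $|h^{\Delta,3}_{\lfloor t/\Delta\rfloor}|$ blow up as $\Delta\downarrow 0$ near the $(2\pi/\Delta)$-periodic zeros of $1-e^{i\omega\Delta}$. The plan is to split the tail into the subregime $1\le|\omega|\le\pi/\Delta$, where the estimate $|\sin(\omega\Delta/2)|\ge|\omega|\Delta/\pi$ lets every power of $\Delta$ cancel and the $b/a$-decay combined with the uniform upper bound $|\Phi_\Delta(e^{i\omega\Delta})|\le 2^p$ and~\eqref{prediction:error} give $|A|^2\le C/\omega^2$; and the subregime $|\omega|>\pi/\Delta$, where the $(2\pi/\Delta)$-periodicity in $\omega$ of the factors involving $e^{i\omega\Delta}$ combined with the strict polynomial decay of $b/a$ (using $p-q\ge 1$) reduces the bound to the previous subregime. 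Integrability of $h$ over $\bbr$ is then immediate from the form of its two pieces.
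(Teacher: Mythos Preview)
Your opening inequality $2|\Re(A\overline B)|\le|A|^2+|B|^2$ and the bound on $|B|^2$ match the paper exactly. The real difficulty is in $|A|^2$, and there your sketch has two gaps.

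\textbf{Low frequencies.} Your bounds (triangle estimate $|1-e^z|\le|z|e^{|z|}$; the crude $|\Theta_\Delta(e^{i\omega\Delta})|\ge\prod_j(1-|\eta(\xi_j)|)\prod_k(1-|\zeta_k|)$; and $|b/a|$ ``bounded on $(-1,1)$'') do give a bound of the form $Ct^2$, but with a constant $C$ that depends on the $\lambda_j$'s, the $\mu_k$'s and the $\eta(\xi_j)$'s. The lemma, however, states the explicit parameter-free constant $7^{2p}/2^{2p+q}$. The paper obtains this by pairing terms rather than bounding them separately: it uses $|1-e^z|\le\tfrac74|z|$ together with $|a(-i\omega)|^2=\prod_j|\lambda_j+i\omega|^2$ so that the $|\lambda_j+i\omega|$'s cancel, yielding $(7\Delta/4)^{2p}$; it pairs $|b(-i\omega)|^2$ with $\prod_k|1-\zeta_k e^{i\omega\Delta}|^2$ via the asymptotic $|1-\zeta_k e^{i\omega\Delta}|^2\ge\tfrac12\Delta^2|{\rm sgn}(\Re\mu_k)\mu_k-i\omega|^2$; and it uses the identity $[2(p-q)-1]!\prod_j\eta(\xi_j)/\prod_j(1+\eta(\xi_j))^2=1$ from \cite{bfk:2011:2} together with $|1+\eta(\xi_j)e^{i\omega\Delta}|\ge\tfrac12|1+\eta(\xi_j)|$. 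Your route cannot recover that constant.

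\textbf{High frequencies.} The bound $|\Phi_\Delta(e^{i\omega\Delta})|\le 2^p$ is too crude and your claim that ``every power of $\Delta$ cancels'' fails. Take $\omega$ fixed, say $\omega=2$, and let $\Delta\downarrow0$. Then $\sigma^2\Delta/\sigma_\Delta^2\sim c\,\Delta^{2-2(p-q)}$, $|h^{\Delta,3}|^2\sim c'\Delta^{-2}$, $|b/a|^2(\omega)$ is a constant, and the $\zeta$-factors satisfy $|1-\zeta_k e^{i\omega\Delta}|\sim c_k\Delta$, so $|\Theta_\Delta(e^{i\omega\Delta})|^{-2}\sim c''\Delta^{-2q}$. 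With $|\Phi_\Delta|^2\le 4^p$ you get $|A|^2\lesssim \Delta^{2-2(p-q)}\cdot\Delta^{-2q}\cdot\Delta^{-2}=\Delta^{-2p}$, which diverges. What is missing is that for moderate $\omega$ the factor $|\Phi_\Delta(e^{i\omega\Delta})|^2$ is itself of order $\Delta^{2p}$ (indeed $|1-e^{\Delta(\lambda_j+i\omega)}|^2\le 4\Delta^2(\omega^2+|\lambda_j|^2)$), and this smallness is essential for the cancellation. Moreover, the lower bound on $|1-\zeta_k e^{i\omega\Delta}|$ is not uniform in $\omega$: it degenerates near $\omega\Delta\in 2\pi\bbz$, so a single estimate on $[1,\pi/\Delta]$ will not do. The paper handles this by splitting $[0,2\pi/\Delta]$ into six subintervals $I_1,\ldots,I_6$ (thresholds at $\tfrac12\min|\mu_j|$, $2\max|\mu_j|$, $\pi/\Delta$, and their reflections about $\pi/\Delta$) and on each piece uses a different, matched pair of bounds for $|1-e^{\Delta(\lambda_j+i\omega)}|$ and $|1-\zeta_k e^{i\omega\Delta}|$. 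Your periodicity reduction for $|\omega|>\pi/\Delta$ is sound in spirit, but it presupposes the bound on $[1,\pi/\Delta]$, which your present argument does not establish.
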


\begin{proof}
 We obviously have 
 \begin{equation} \label{obvious bound}
  \left|2\Re\left(h^{\Delta, 1}\cdot h_{\lfloor t/\Delta\rfloor}^{\Delta, 3}(\omega)\cdot\overline{h^{\Delta, 2}\cdot h_{\lfloor t/\Delta\rfloor}^{\Delta, 3}(\omega)}\right)\right|
  \leq\left|h^{\Delta, 1}\cdot h_{\lfloor t/\Delta\rfloor}^{\Delta, 3}(\omega)\right|^2 +  \left|h^{\Delta, 2}\cdot h_{\lfloor t/\Delta\rfloor}^{\Delta, 3}(\omega)\right|^2
 \end{equation}
 for any $\omega\in\bbr$ and any $\Delta$. Let us first consider the second addend on the RHS of Eq.~\eqref{obvious bound}.

 We obtain $|h^{\Delta, 2}\cdot h_{\lfloor t/\Delta\rfloor}^{\Delta, 3}(\omega)|^2 = 2(1 - \cos(\omega \lfloor t/\Delta\rfloor \Delta))/\omega^2$ and since $\lfloor t/\Delta\rfloor \Delta\leq t$ holds, 
 we can bound, for any $\Delta$, the latter function by $t^2$ on the interval $(-1,1)$ and by $4/\omega^2$ on $\bbr\backslash(-1,1)$. 

 As to the first addend on the RHS of Eq.~\eqref{obvious bound}, we calculate
 \begin{equation} \label{first addend}
  \left|h^{\Delta, 1}\cdot h_{\lfloor t/\Delta\rfloor}^{\Delta, 3}(\omega)\right|^2 
  = \sigma^2\frac{\Delta}{\sigma_\Delta^2}\frac{\prod_{j=1}^p \left|1 - e^{\Delta(\la_j + i\omega)}\right|^2}{\left|\Theta_\Delta(e^{i\omega\Delta})\right|^2}\frac{\left|b(-i\omega)\right|^2}{\left|a(-i\omega)\right|^2} \cdot \frac{1 - \cos(\omega\lfloor t/\Delta\rfloor \Delta)}{1 - \cos(\omega\Delta)}.
 \end{equation}
 
Let now $|\omega|<1$ and suppose that $\Delta$ is sufficiently small, i.e. the following inequalities are true for any $|\omega|<1$ whenever $\Delta$ is sufficiently small. 
 Using $|1 - e^z|\leq7/4|z|$ for $|z|<1$ (see, e.g., \cite[4.2.38]{abramowitz}) yields
 $$ \frac{\prod_{j=1}^p\left|1 - e^{\Delta(\la_j + i\omega)}\right|^2}{\left|a(-i\omega)\right|^2} \leq \left(\frac{7}{4}\Delta\right)^{2p}.$$
 The inequalities $(1-\cos(\omega\Delta))/(\omega\Delta)^2 \geq 1/4$ and $4(1-\cos(\omega\lfloor t/\Delta\rfloor\Delta))/\omega^2 \leq 2t^2$ (see above) imply
 $$ \frac{1 - \cos(\omega\lfloor t/\Delta\rfloor \Delta)}{1 - \cos(\omega\Delta)} \leq 2\left(\frac{t}{\Delta}\right)^2. $$
 As in the proof of Lemma \ref{convergence:integrand} we write $\Theta_{\Delta}(z) = \prod_{j=1}^{p-q-1} (1 + \eta(\xi_j)z)\cdot\prod_{j=1}^{q}(1-\zeta_j z),$
 where \linebreak $\zeta_j = 1 - {{\rm sgn}(\Re(\mu_j))}\,\mu_j\,\Delta + o(\Delta)$ (see \cite{bfk:2011:2}, Theorem 2.1). Since \linebreak $\prod_{j=1}^q\left(\left|1 - \zeta_j e^{i\omega\Delta}\right|/\Delta\right)^2\geq\prod_{j=1}^q {1}/{2}\left|{{\rm sgn}(\Re(\mu_j))}\,\mu_j - i\omega\right|^2$,
 we further deduce 
 $$ \frac{\left|b(-i\omega)\right|^2}{\prod_{j=1}^q\left|1 - \zeta_j e^{i\omega\Delta}\right|^2} \leq \frac{2^q}{\Delta^{2q}} .$$ 
 Again due to Eq.~\eqref{prediction:error}, we obtain
 \begin{align*}
  \sigma^2\frac{\Delta}{\sigma_\Delta^2}\prod\limits_{j=1}^{p-q-1}\left|1 + \eta(\xi_j)e^{i\omega\Delta}\right|^{-2} 
   &\leq \frac{2\cdot[2(p-q)-1]!}{\Delta^{2(p-q-1)}}\prod\limits_{j=1}^{p-q-1}\frac{\left|\eta(\xi_j)\right|}{\left|1 + \eta(\xi_j)e^{i\omega\Delta}\right|^2}
 \end{align*}
 and since $|\eta(\xi_j)|<1$ for all $j$ (see Proposition \ref{rootsgreater1}) we also have that $|1 + \eta(\xi_j)e^{i\omega\Delta}|\geq\frac{1}{2}|1 + \eta(\xi_j)|$ for all $j$, resulting in 
 \begin{align*}
  \sigma^2\frac{\Delta}{\sigma_\Delta^2}\prod\limits_{j=1}^{p-q-1}\left|1 + \eta(\xi_j)e^{i\omega\Delta}\right|^{-2} 
   &\leq\frac{2^{2(p-q)-1}}{\Delta^{2(p-q-1)}}\cdot[2(p-q)-1]!\prod\limits_{j=1}^{p-q-1}\frac{\left|\eta(\xi_j)\right|}{\left|1 + \eta(\xi_j)\right|^2} = \frac{2^{2(p-q)-1}}{\Delta^{2(p-q-1)}}.
 \end{align*}
 The latter equality follows from \cite[proof of Theorem 3.2]{bfk:2011:2}. All together the RHS of Eq.~\eqref{first addend} can be bounded for any $|\omega|<1$ and any sufficiently small $\Delta$ by
 $\left(7/2\right)^{2p} 2^{-q} t^2.$

 It remains to bound the RHS of Eq.~\eqref{first addend} also for $|\omega|\geq 1$. Hence, for the {rest of the} proof let us suppose $|\omega|\geq 1$ and in addition we assume again that $\Delta$ is sufficiently small. 
 We show that 
 $$ \sigma^2\frac{\Delta}{\sigma_\Delta^2}\frac{\prod_{j=1}^p \left|1 - e^{\Delta(\la_j + i\omega)}\right|^2}{\left|\Theta_\Delta(e^{i\omega\Delta})\right|^2}\frac{\left|b(-i\omega)\right|^2}{\left|a(-i\omega)\right|^2} \frac{1 - \cos(\omega\lfloor t/\Delta\rfloor \Delta)}{1 - \cos(\omega\Delta)}
  \leq\frac{C}{\omega^2} $$
 for some $C>0$. Since $\big|{\sigma^2\Delta}/\sigma_\Delta^2\big|\leq{\rm const.}\cdot\big|{\Delta^2}/{\Delta^{2(p-q)}}\big|$ (see \eqref{prediction:error}) and since
 $\prod_{j=1}^{p-q-1}|1 + \eta(\xi_j)e^{i\omega\Delta}|^{-2}\leq \prod_{j=1}^{p-q-1}(1 - |\eta(\xi_j)|)^{-2} \leq {\rm const.}$ {(cf. Proposition \ref{rootsgreater1})}, {it is sufficient} to prove 
 \begin{equation} \label{constant bound}
  \frac{(\omega\Delta)^2}{\Delta^{2(p-q)}}\frac{\prod_{j=1}^p \left|1 - e^{\Delta(\la_j + i\omega)}\right|^2}{\prod_{j=1}^q\left|1 - \zeta_je^{i\omega\Delta}\right|^2}\frac{\left|b(-i\omega)\right|^2}{\left|a(-i\omega)\right|^2} \frac{1 - \cos(\omega\lfloor t/\Delta\rfloor \Delta)}{1 - \cos(\omega\Delta)}
  \leq C
 \end{equation}
 for some $C>0$. {The power transfer function satisfies} $|b(-i\omega)|^2/|a(-i\omega)|^2\leq{{\rm const.}}/(\omega^{2(p-q)} + 1)$ for any $\omega\in\bbr$. Thus, Eq.~\eqref{constant bound} will follow from 
 \begin{equation} \label{constant bound:2}
  \frac{(\omega\Delta)^2}{(\omega\Delta)^{2(p-q)} + \Delta^{2(p-q)}}\frac{\prod_{j=1}^p \left|1 - e^{\Delta(\la_j + i\omega)}\right|^2}{\prod_{j=1}^q\left|1 - \zeta_je^{i\omega\Delta}\right|^2} \frac{1 - \cos(\omega\lfloor t/\Delta\rfloor \Delta)}{1 - \cos(\omega\Delta)}
  \leq C.
 \end{equation}
 We even show that Eq.~\eqref{constant bound:2} is true for any $\omega\in\bbr$. However, using symmetry and periodicity arguments it is sufficient to prove Eq.~\eqref{constant bound:2} on the interval $[0, \frac{2\pi}{\Delta}]$. 
 We split that interval into the following six subintervals 
 $$I_1 := \left[0, \min_{j=1,\ldots,q}\frac{|\mu_j|}{2}\right],\ I_2 := \left[\min_{j=1,\ldots,q}\frac{|\mu_j|}{2}, \max_{j=1,\ldots,q}2|\mu_j|\right],\
 I_3 := \left[\max_{j=1,\ldots,q}2|\mu_j|, \frac{\pi}{\Delta}\right],$$
 $$ I_4 := \left[\frac{\pi}{\Delta}, \frac{2\pi}{\Delta} - \max_{j=1,\ldots,q}2|\mu_j|\right],\
 I_5 := \left[\frac{2\pi}{\Delta} - \max_{j=1,\ldots,q}2|\mu_j|, \frac{2\pi}{\Delta} -\min_{j=1,\ldots,q}\frac{|\mu_j|}{2}\right]\text{ and }$$
 $$I_6 := \left[\frac{2\pi}{\Delta} -\min_{j=1,\ldots,q}\frac{|\mu_j|}{2}, \frac{2\pi}{\Delta}\right].$$ 

 For any $\omega\in I_1 \cup I_6$, the fraction $\frac{1 - \cos(\omega\lfloor t/\Delta\rfloor \Delta)}{1 - \cos(\omega\Delta)}$ can be bounded by $\lfloor t/\Delta\rfloor^2$.  
 In the other intervals we have the obvious bound $\frac{2}{1-\cos(\omega\Delta)}$ for that term. 

 Now, for any $j=1,\ldots,p$, we have, as $\Delta\downarrow 0$,  
 \begin{align*}
  \left|1-e^{\Delta\la_j}\cdot e^{i\omega\Delta}\right|^2 &\leq {2\left|1-e^{i\omega\Delta}\right|^2 + 4\Delta^2\left|\la_j\right|^2} = {8}\sin^2\left(\frac{\omega\Delta}{2}\right) + 4\Delta^2\left|\la_j\right|^2 
    \leq 4\Delta^2\left(\omega^2 + \left|\la_j\right|^2\right) 
 \end{align*}
 if $\omega\in I_1\cup I_2\cup I_3$, and $\left|1-e^{\Delta\la_j}\cdot e^{i\omega\Delta}\right|^2 \leq 4\Delta^2\left(\left({2\pi}/\Delta - \omega\right)^2 + \left|\la_j\right|^2\right)$ 
 if $\omega\in I_4\cup I_5\cup I_6$. 

 The first fraction on the LHS of Eq.~\eqref{constant bound:2} satisfies
 $$ \frac{(\omega\Delta)^2}{(\omega\Delta)^{2(p-q)} + \Delta^{2(p-q)}} \leq 
 \left\{ \begin{array}{l} \min\limits_{j=1,\ldots,q}\frac{|\mu_j|}{2}\cdot\frac{\Delta^2}{\Delta^{2(p-q)}},\ \hfill\text{if }\omega\in I_1, \\ \frac{(\omega\Delta)^2}{(\omega\Delta)^{2(p-q)}},\ \hfill\text{if }\omega\in I_2\cup I_3, \\ \frac{(2\pi)^2}{\pi^{2(p-q)}},\ \hfill\text{if }\omega\in I_4\cup I_5\cup I_6. \end{array} \right. $$
 
 Then, for any $j=1,\ldots,q$ and $\omega\in I_1\cup I_6$, we obtain
 \begin{align*}
  \left|1 - \zeta_je^{i\omega\Delta}\right|^2 &= \left|1 - (1-{{\rm sgn}(\Re(\mu_j))}\,\mu_j\Delta + o(\Delta))e^{i\omega\Delta}\right|^2 \geq \frac{1}{2}\Delta^2\left|{{\rm sgn}(\Re(\mu_j))}\,\mu_j - i\omega\right|^2 \\
   &\geq \frac{1}{8}\Delta^2\left|\mu_j\right|^2. 
 \end{align*}
 If $\omega\in I_3$, we have 
 \begin{align*}
  \left|1 - \zeta_je^{i\omega\Delta}\right|^2 &\geq \left(\left|1 - e^{i\omega\Delta}\right| - \left|\mu_j + o(1)\right|\Delta\right)^2 = \left(2\sin\left(\frac{\omega\Delta}{2}\right) - \left|\mu_j + o(1)\right|\Delta\right)^2 \\
   &\geq \Delta^2\left(\frac{3}{5}\omega - \left|\mu_j + o(1)\right|\right)^2
 \end{align*}
 and likewise, for $\omega\in I_4$, we deduce $\left|1 - \zeta_je^{i\omega\Delta}\right|^2 \geq \Delta^2\left(\frac{3}{5}(\frac{2\pi}{\Delta}-\omega) - \left|\mu_j + o(1)\right|\right)^2$. For $\omega\in I_2$ we get with arbitrary $\varepsilon>0$
 \begin{align*}
  \left|1 - \zeta_je^{i\omega\Delta}\right|^2 &= 2(1-\cos(\omega\Delta))\cdot(1-\Delta\,{{\rm sgn}(\Re(\mu_j))}\,\Re(\mu_j)+o(\Delta)) \\
  &\qquad\qquad\qquad + 2\sin(\omega\Delta)\cdot(-\Delta\,{{\rm sgn}(\Re(\mu_j))}\,\Im(\mu_j) + o(\Delta)) + \Delta^2|\mu_j|^2 + o(\Delta^2) \\
   & \geq (\omega\Delta)^2 \cdot (1-\varepsilon) - 2(\omega\Delta)\cdot\Delta\left|\Im(\mu_j)\right|\cdot(1+\varepsilon) + \Delta^2\left(|\mu_j|^2 + o(1)\right) =: f_\varepsilon^\Delta(\omega\Delta).
 \end{align*}
 Since $f_\varepsilon^\Delta(\omega)/\omega^2 \to 1-\varepsilon\ (\omega\to\infty)$ and $f_\varepsilon^\Delta(\omega)/\omega^2 \to\infty\ (\omega\to 0)$, a (global) minimum of $f_\varepsilon^\Delta(\omega)/\omega^2$ on $(0,\infty)$ could 
 be achieved in any $\omega^*$ with $\big(\frac{{\rm d}}{{\rm d}\omega}\frac{f_\varepsilon^\Delta(\omega)}{\omega^2}\big)(\omega^*) = 0$. The only such value is 
 $\omega^* = \frac{\Delta(|\mu_j|^2 + o(1))}{(1+\varepsilon)|\Im(\mu_j)|}$. Since 
 $$\frac{f_\varepsilon^\Delta(\omega^*)}{(\omega^*)^2} = 1-\varepsilon - (1+\varepsilon)^2\frac{|\Im(\mu_j)|^2}{|\mu_j|^2 + o(1)} \geq (1+\varepsilon)\frac{\Re(\mu_j)^2}{|\mu_j|^2} - 3\varepsilon -\varepsilon^2 \geq \frac{1}{2}\frac{\Re(\mu_j)^2}{|\mu_j|^2}$$ for, e.g., $\varepsilon = \frac{1}{6}\frac{\Re(\mu_j)^2}{|\mu_j|^2}$, we obtain
 $\frac{f_\varepsilon^\Delta(\omega)}{\omega^2} \geq \frac{1}{2}\frac{\Re(\mu_j)^2}{|\mu_j|^2}$ for any $\omega\in(0,\infty)$. 
 Hence, 
 $$ \left|1 - \zeta_je^{i\omega\Delta}\right|^2 \geq f_\varepsilon^\Delta(\omega\Delta) \geq \frac{1}{2}\frac{\Re(\mu_j)^2}{|\mu_j|^2}(\omega\Delta)^2\quad\text{ for all }\omega\in I_2.$$
 Using periodic properties of the sine and cosine terms, we likewise get 
 $$ \left|1 - \zeta_je^{i\omega\Delta}\right|^2 \geq \frac{1}{2}\frac{\Re(\mu_j)^2}{|\mu_j|^2}\Delta^2\left(\frac{2\pi}{\Delta} - \omega\right)^2\quad\text{ for any }\omega\in I_5.$$
 
 Putting all together, we can bound the LHS of Eq.~\eqref{constant bound:2} in $I_1$ by
 \begin{align*}
  \min\limits_{j=1,\ldots,q}\frac{|\mu_j|}{2}\cdot\frac{(\lfloor t/\Delta\rfloor\Delta)^2}{\Delta^{2(p-q)}}&\frac{4^p\Delta^{2p}\cdot\prod_{j=1}^p \big({\min_{k=1,\ldots,q}|\mu_k|^2/4} + |\la_j|^2\big)}{8^{-q}\Delta^{2q}\prod_{j=1}^q|\mu_j|^2} \\ 
   &\leq \min\limits_{j=1,\ldots,q}\frac{|\mu_j|}{2}\cdot t^2\cdot\frac{4^{p+q}\cdot\prod_{j=1}^p \big({\min_{k=1,\ldots,q}|\mu_k|^2/4} + |\la_j|^2\big)}{\prod_{j=1}^q\frac{1}{2}|\mu_j|^2} = C,
 \end{align*}
 in $I_2$ by
 \begin{align*}
  \frac{2(\omega\Delta)^2}{1-\cos(\omega\Delta)}&\frac{4^p\Delta^{2p}\cdot\prod_{j=1}^p \left(4\max_{k=1,\ldots,q}|\mu_k|^2 + |\la_j|^2\right)}{(\omega\Delta)^{2p}\cdot\prod_{j=1}^q\frac{1}{2}\frac{\Re(\mu_j)^2}{|\mu_j|^2}} \\
   &\leq \frac{5\cdot 4^{2p}\cdot\prod_{j=1}^p \left(4\max_{k=1,\ldots,q}|\mu_k|^2 + |\la_j|^2\right)}{\min_{j=1,\ldots,q}|\mu_j|^{2p}\cdot\prod_{j=1}^q\frac{1}{2}\frac{\Re(\mu_j)^2}{|\mu_j|^2}} = C,
 \end{align*}
 in $I_3$ by 
 \begin{align*}
  \frac{2(\omega\Delta)^2}{1-\cos(\omega\Delta)}&\frac{{4^p}(\omega\Delta)^{2p}\cdot\prod_{j=1}^p \left(1+\frac{|\la_j|^2}{4\max_{k=1,\ldots,q}|\mu_k|^2}\right)}{(\omega\Delta)^{2(p-q)}\cdot(\frac{1}{20}\omega\Delta)^{2q}} \\
   &\leq \pi^2\,{4^{p}}\,20^{2q}\prod_{j=1}^p \left(1+\frac{|\la_j|^2}{4\max_{k=1,\ldots,q}|\mu_k|^2}\right) = C,
 \end{align*}
 in $I_4$ by
 \begin{align*}
  \frac{(2\pi)^2}{\pi^{2(p-q)}}\frac{2}{1-\cos(\omega\Delta)}&\frac{{4^p}(2\pi - \omega\Delta)^{2p}\cdot\prod_{j=1}^p \left(1+\frac{|\la_j|^2}{4\max_{k=1,\ldots,q}|\mu_k|^2}\right)}{20^{-2q}\,(2\pi - \omega\Delta)^{2q}} \\
   & \leq {4^{p+1}}\,20^{2q}\prod_{j=1}^p \left(1+\frac{|\la_j|^2}{4\max_{k=1,\ldots,q}|\mu_k|^2}\right)\frac{{2}\cdot(2\pi - \omega\Delta)^{2}}{1-\cos(2\pi - \omega\Delta)} \\
   & \leq \pi^2\,{4^{p+1}}\,20^{2q}\prod_{j=1}^p \left(1+\frac{|\la_j|^2}{4\max_{k=1,\ldots,q}|\mu_k|^2}\right) = C,
 \end{align*}
 in $I_5$ by
 \begin{align*}
  \frac{(2\pi)^2}{\pi^{2(p-q)}}\frac{2}{1-\cos(\omega\Delta)}&\frac{4^p\Delta^{2p}\cdot\prod_{j=1}^p \left(4\max_{k=1,\ldots,q}|\mu_k|^2 + |\la_j|^2\right)}{\Delta^{2q}\prod_{j=1}^q\frac{1}{8}\min_{k=1,\ldots,q}|\mu_k|^2(\Re(\mu_j)/|\mu_j|)^2} \\
   &\leq \frac{(2\pi)^2}{\pi^{2(p-q)}}\frac{4^p\cdot\prod_{j=1}^p \left(4\max_{k=1,\ldots,q}|\mu_k|^2 + |\la_j|^2\right)}{\prod_{j=1}^q\frac{1}{8}\min_{k=1,\ldots,q}|\mu_k|^2(\Re(\mu_j)/|\mu_j|)^2}\frac{2\Delta^2}{1-\cos(2\pi - \omega\Delta)} \\
   &\leq \frac{(2\pi)^2}{\pi^{2(p-q)}}\frac{4^p\cdot\prod_{j=1}^p \left(4\max_{k=1,\ldots,q}|\mu_k|^2 + |\la_j|^2\right)}{\prod_{j=1}^q\frac{1}{8}\min_{k=1,\ldots,q}|\mu_k|^2(\Re(\mu_j)/|\mu_j|)^2}\frac{5\cdot 4}{\min_{j=1,\ldots,q}|\mu_j|^2} = C,
 \end{align*}
 and, finally, in $I_6$ by
 \begin{align*}
  \frac{(2\pi\lfloor t/\Delta\rfloor)^2}{\pi^{2(p-q)}}&\frac{4^p\Delta^{2p}\cdot\prod_{j=1}^p \big({\min_{k=1,\ldots,q}|\mu_k|^2/4} + |\la_j|^2\big)}{8^{-q}\Delta^{2q}\prod_{j=1}^q|\mu_j|^2} \\
  & \leq \frac{(2\pi t)^2}{\pi^{2(p-q)}}\frac{4^{p+q}\cdot\prod_{j=1}^p \left({\min_{k=1,\ldots,q}|\mu_k|^2/4} + |\la_j|^2\right)}{\prod_{j=1}^q\frac{1}{2}|\mu_j|^2} = C.
 \end{align*}
 This shows Eq.~\eqref{constant bound:2} and thus concludes the proof.

\end{proof}

\end{appendix}
\subsubsection*{Acknowledgments}
The two authors take pleasure in thanking their PhD advisors Vicky Fasen, Claudia Kl\"uppelberg and Robert Stelzer for helpful comments, discussions and careful proofreading. Moreover, the authors are grateful to Peter Brockwell for comments on previous drafts. The work of V.F. was supported by the International Graduate School of Science and Engineering (IGSEE) of the Technische Universit\"at M\"unchen. Financial support for F.F. by the Deutsche Forschungsgemeinschaft through the research grant STE 2005/1-1 is gratefully acknowledged. We are also indebted to an associate editor and a referee for valuable comments. 
\bibliographystyle{kluwer.bst}

\end{document}